\numberwithin{equation}{section}
\numberwithin{figure}{section}
  \theoremstyle{remark}
  \newtheorem*{acknowledgement*}{Acknowledgement}
  \theoremstyle{plain}
  \newtheorem*{fact*}{Fact}
\theoremstyle{plain}
\newtheorem{thm}{Theorem}
 \theoremstyle{definition}
  \newtheorem{example}[thm]{Example}
  \theoremstyle{plain}
  \newtheorem*{assumption*}{Assumption}
  \theoremstyle{remark}
  \newtheorem{rem}[thm]{Remark}
  \theoremstyle{definition}
  \newtheorem{defn}[thm]{Definition}
  \theoremstyle{plain}
  \newtheorem{prop}[thm]{Proposition}
  \theoremstyle{remark}
  \newtheorem{notation}[thm]{Notation}
  \theoremstyle{plain}
  \newtheorem{lem}[thm]{Lemma}
  \theoremstyle{plain}
  \newtheorem{cor}[thm]{Corollary}
\newcommand{\R}{\mathbb{R}}
\newcommand{\C}{\mathbb{C}}
\newcommand{\N}{\mathbb{N}}
\newcommand{\T}{\mathbb{T}}
\newcommand{\xH}{{\mathcal{H}_\hbar}}
\newcommand{\Frame}{\mathbf{F}}
\newcommand{\Energy}{\mathcal{E}}
\newcommand{\SCint}{{\textrm {s.c.}}\, \int}
\newcommand{\xOp}{\mathrm{Op}}
\newcommand{\Four}{{\bf Four}}
\newcommand{\SFour}{{\bf SFour}}
\newcommand{\WF}{\operatorname{WF}}
\newcommand{\cat}[3]{{#1}_{#2}^{#3}}
\newcommand{\micro}{\bf{mic}}
\newcommand{\ext}{\bf{ext}}
\newcommand{\sympl}{\bf{Sympl}}
\newcommand{\Mic}{\cat{\bf Cot}{\micro}{\ext}}
\newcommand{\ExtSympl}{\cat{\sympl}{}{\ext}}
\newcommand{\im}{{\operatorname{Im}}}
\newcommand{\Cot}{{T}^*}
\newcommand{\id}{\operatorname{id}}
\newcommand{\graph}{\operatorname{gr}}
\newcommand{\stat}[1]{{\bf Stat}_{(#1)}}
\newcommand{\neutral}{\textbf{E}}
\newcommand{\e}{\textbf{e}}
\newcommand{\pr}{\operatorname{pr}}
\begin{document}

\title{Symplectic Microgeometry IV: 
Quantization
}

\author{Alberto S. Cattaneo}
\address{Institut f\"ur Mathematik, Universit\"at Z\"urich\\
Winterthurerstrasse 190, CH-8057 Z\"urich, Switzerland}  
\email{cattaneo@math.uzh.ch}

\author{Benoit Dherin}
\address{Google, Clanwilliam Pl, Grand Canal Dock, Dublin, Ireland}
\email{dherin@google.com}

\author{Alan Weinstein}
\address{Department of Mathematics, University of California, Berkeley CA 94720 USA and Department of Mathematics, Stanford University, Stanford, CA 94305 USA}
\email{alanw@math.berkeley.edu}

\begin{abstract}
We construct a special class of semiclassical Fourier integral operators
whose wave fronts are the symplectic micromorphisms of \cite{SM_Microfolds}.
These operators have very good properties: they form a category on
which the wave front map becomes a functor into the cotangent microbundle
category, and they admit a total symbol calculus in terms of symplectic
micromorphisms enhanced with half-density germs. This new operator
category encompasses the semi-classical pseudo-differential calculus
and offers a functorial framework for the semi-classical analysis
of the Schr\"odinger equation. We also comment on applications to classical
and quantum mechanics as well as to a functorial and geometrical approach
to the quantization of Poisson manifolds. 
\end{abstract}
\maketitle
\tableofcontents{}
\begin{acknowledgement*}
The research of A.S.C. was partially supported by the NCCR SwissMAP, funded by the Swiss National Science Foundation, and by
SNF Grant No.\  200020\_192080. 
B.D. acknowledges partial support from the NWO Grant 613.000.602
and the SNF Grant PA002-113136. A.W. acknowledges partial support
from NSF grant DMS-0707137 and the UC Berkeley Committee on Research. We would like to thank David Barrett, Ivan Contreras and  
 Pedro de M. Rios for useful comments on this paper and discussions on generating
families, and Ted Voronov for extensive correspondence concerning his work on thick morphisms.  The latter began  around 2014, when we had prepared a preliminary version of the present paper, and Voronov had  posted on the ArXiv preprints of work (see citations in the introduction below) published several years later.
\end{acknowledgement*}

\section{Introduction}

From an analytical point of view, symplectic geometry is the geometry
underlying the calculus of Fourier integral operators (FIO's) \cite{BW1997,Duistermaat1996,DH1972,GS1977,Treves1981}.
The present article is concerned with developing a similar calculus
in the context of microsymplectic geometry \cite{SM_Microfolds}.
Since the latter enjoys much better functorial properties than its
macro analog, it is not too surprising that these good properties
persist in the associated operator calculus.

The geometry of canonical relation composition in the symplectic
``category'' is central to symplectic geometry itself in many
ways \cite{Weinstein1977}. This ``geometric calculus'' also
plays an important role in the calculus of FIO's through a  canonical relation
associated to each FIO: its wave front. The geometry of the wave front
usually contains a lot of information about the class of FIO's associated
to it. The reader will find detailed treatments of this relationship
in the standard literature on FIO's \cite{Duistermaat1996,DH1972,GS1977,Hormander1971,Treves1981}).
Here, we are mostly concerned with functorial and categorical aspects
of this relationship between the FIO's and their wave fronts. Namely,
just as for canonical relations, the composition of two
FIO's usually fails to be a FIO. Now, a well-behaved composition of
the wave front canonical relations, if some restriction is imposed on them, can guarantee a well-behaved composition
of the operators in the corresponding FIO classes. In this sense,
the symplectic geometry of the wave fronts controls the categorical
and functorial aspects of the FIO calculus.

In \cite{SM_Microfolds}, we constructed a category of particularly
well-behaved germs of canonical relations: the symplectic micromorphisms.
This article studies the corresponding class of (semi-classical) FIO's,
which inherits, for the most part, the good properties of the symplectic
micromorphism composition. It turns out that this class of operators
encompasses and extends the whole calculus of pseudo-differential operators
in the semi-classical limit. Moreover, the hamiltonian flows of classical
dynamics can be described, for asymptotically small times, in terms
of some symplectic micromorphisms satisfying purely algebraic relations
mimicking  time translations at a purely categorical level.
Upon quantization, we recover the Schr\"odinger flow of quantum mechanics
in the semi-classical limit in terms of our semi-classical FIO's. 
This approach to quantization is very related to the considerations surrounding the notion of ``thick morphism" developed by Voronov
in \cite{voronovthick,voronovpullback,voronovmicroformal} and by Khudaverdian and Voronov in \cite{ThickbbMorphisms}.  Their work, done in the context of supermanifolds, was originally aimed at an understanding of $L_\infty$ morphisms between homotopy structures, but many of their results are parallel to ours when restricted to the case of ordinary manifolds.
In addition, many of our results have  also been used in \cite{ActQuant, GSystems, MomMap} by Mencattini and one of the authors to quantize momentum maps as well
as the underlying group actions. In \cite{Leibniz}, it has been used by Wagemann and one of the authors to quantize Leibniz algebras.

One defining trait of Fourier integral operators is that they admit
explicit representations in terms of oscillatory integrals, once given
a generating family for their wave fronts. Symplectic micromorphisms
possess canonical generating families (up to a choice of an exponential
map germ on the smooth manifolds) with very good properties (stated
in paragraph \ref{sub:Integral-representation}) which allow one to
define a total symbol calculus for their corresponding semi-classical
FIO's. These canonical generating families are obtained as deformations
of generating families for cotangent lifts. Locally, such a semiclassical FIO
\begin{eqnarray*}
Q_{\hbar}(a,f,\phi):L_{\hbar}^{2}(\R^{m}) & \longrightarrow & L_{\hbar}^{2}(\R^{n})
\end{eqnarray*}
may be written in integral form similar to that for pseudo-differential
operators
\begin{equation}\label{eq:Loc-Rep}
\big(Q_{\hbar}u\big)(y)  :=  (2\pi\hbar)^{-\frac{m+n}{2}}\int a(p,y)e^{\frac{i}{\hbar}\big(\langle p,\phi(y)-x\rangle+f(p,y)\big)}u(x)\,dp\; dx,
\end{equation}
where $\phi:\R^{n}\rightarrow\R^{m}$ is a smooth map, $a$ is an
$\hbar$-dependent smooth function (on the pullback bundle $\phi^*(\Cot\R^{m})$) called the symbol of the operator,
and the phase is the generating function of a symplectic micromorphism
 from $\Cot\R^{m}$ to $\Cot\R^{n}$ with core map $\phi$. (See section \ref{sub:The-semiclassical-intrinsic} for a
precise definition of the semiclassical intrinsic Hilbert space $L_{\hbar}^{2}(\R^{n})$.) We recover pseudo-differential
operators when $m=n$, $\phi=\id$ and $f=0$, in which case the
corresponding symplectic micromorphism is the identity.

In this paper, we will mostly focus on the semi-classical limit of such operators, using the integral symbol "$\SCint $" instead of "$\int$" to remind us of this fact. The semi-classical limit is concerned with equivalence classes of such operators that have the same asymptotic behavior when the parameter
$\hbar$ in the phase of the oscillatory integral goes to zero. Depending
on the problem at hand, one may be interested in asymptotics of order
$\hbar^{N}$ for some fixed $N$; here, we will be interested
in asymptotics modulo $\hbar^{\infty}$, which roughly means that
we consider two FIO's equivalent if they have the same complete asymptotic
expansions in $\hbar$ as $\hbar\rightarrow0$ . 

An application of the stationary phase principle, Theorem \eqref{eq:Loc-Rep}
shows that the semi-classical limit of an FIO is controlled by the germ of
its wave front $\WF(Q_{\hbar})$ and of its symbol around $\graph\phi$ (the graph of
the symplectic micromorphism core map).  The symplectic microgeometry
terminology introduced in \cite{SM_Microfolds} is a very convenient
language to deal with the semi-classical limit of these FIO's.

Let us recall some basic definitions of microgeometry
as introduced in \cite{SM_Microfolds}. A \textbf{microfold} 
$[M,A]$ (called a ``local manifold pair" in \cite{Weinstein1971}) is an equivalence class of manifold pairs $(M,A)$, where
$A$ is a closed submanifold of $M$ such that two pairs $(M_{1},A)$
and $(M_{2},A)$ are equivalent if there exists a third one $(M_{3},A)$
for which $M_{3}$ is simultaneously an open submanifold of both $M_{1}$
and $M_{2}$. A \textbf{symplectic microfold} is a microfold $[M,A]$
such that $M$ is a symplectic manifold and $A$ is a lagrangian submanifold;
a \textbf{lagrangian submicrofold} $[L,S]$ of $[M,A]$ is a submicrofold
(i.e. a microfold such that $L\subset M$ and $S\subset A$) such
that $L$ is a lagrangian submanifold. 

This paper initiates the study of Fourier integral operators for which the semi-classical limit is controlled by a lagrangian submicrofold
of its wave front that satisfies the transversality condition in
\cite{SM_Microfolds}, or, in other words, by a symplectic micromorphism. 

\subsection*{Outline of the paper}

In Section \ref{sec:Categories-of-Fourier}, we give a brief introduction
to the calculus of semi-classical Fourier integral operators. In particular,
we review the central notion of generating families for lagrangian
submanifolds. Example \ref{sub:ConormBundle} recalls a standard construction
for generating families of conormal bundles which depends only on
a choice of a tubular neighborhood. This is the central construction
on which we are going to build on in this paper. We also stress the
functorial aspects of the FIO calculus, and we introduce the notion
of a {}``Fourier category'' associated to any always-well-composing collection of 
wave fronts. In paragraph \ref{sub:Integral-representation}, we isolate
special conditions for the wave fronts in terms of their generating
families guaranteeing a very convenient integral representation for
their associated FIO's. 

In Section \ref{sec:Quantization-of-cotangent}, we focus on semi-classical
FIO's whose wave fronts are cotangent lifts of smooth maps. These
canonical relations form a category, and so do their associated
FIO's. We show how to use the standard generating family construction
for conormal bundles in the context of cotangent lifts using exponential
map germs to construct the tubular neighborhood required by the construction.
We show that the domain of such a  generating family admits a canonical
vertical half-density. This fact is very important to us,
since it reduces the usual ambiguity of the FIO integral representation
to only one thing: the choice of exponential germs on the underlying
manifolds. 

In Section \ref{sec:Quantization-of-symplectic}, we extend the class
of allowed wave fronts from cotangent lifts to all symplectic micromorphisms.
To do so, we first introduce a notion of deformation for conormal
bundles and their generating families. Then, we show that symplectic
micromorphisms are in one-to-one correspondence with deformations
of cotangent lifts generating families once an exponential map germ
has been fixed. This allows us to extend the integral representation
of Section \ref{sec:Quantization-of-cotangent} to FIO's whose wave
fronts are general symplectic micromorphisms. We study the local theory
of these operators, with several examples; in particular, we show
that semi-classical pseudo-differential operators fall into our class.
Moreover, we give explicit formulas for the operator composition in
this local setting; this involves a composition formula for the wave
front generating families as well as for the total symbols of these
operators. 

The presence of a canonical exponential in the local setting allows
us to identify our category of FIO's with the category of enhanced
symplectic micromorphisms between cotangent bundles of $\R^{n}$ for
$n=1,2,\ldots$,
where an enhanced symplectic micromorphism is a symplectic micromorphism
carrying near its core a half-density germ corresponding to the total
symbol of the operator. Namely, in this context, we have two inverse
functors: the quantization functor $Q_{\hbar}$ that associates to
an enhanced symplectic micromorphism a semi-classical FIO through
the integral representation as in \eqref{eq:Loc-Rep} and the total
symbol functor $\sigma$ that associates to the operator \eqref{eq:Loc-Rep}
its wave front (in the form of its generating function $f$) enhanced
with the total symbol $a$ (identified in the local case with a smooth
function germ on $\phi^{*}(\Cot\R^{m})$ around the zero section). 

In Section \ref{sec:Applications-and-further}, we comment on applications
and further directions. In particular, we explain how to extend the
quantization and total symbol functors, which are present in the local
setting, to a semi-classical FIO calculus over any smooth manifold
category enriched with some additional geometric structures sufficient
to allow the construction of exponential map germs on the manifolds
in a canonical way. We continue by relating the calculus developed
here to the quantization of Poisson manifolds via oscillatory integrals
and symplectic groupoids \cite{Karasev1986,Weinstein1990,Zakrzewski1990}.
(In particular the generating function of the formal symplectic groupoid
integrating a Poisson structure obtained in \cite{CDF2005} can be understood as the
jet of the generating function of the quantizing FIO in this setting.)
Finally, we show how the small-time asymptotics of classical hamiltonian
mechanics can be expressed in a purely categorical way in the framework
of symplectic microgeometry. In particular, we show how classical
flows can be modeled as the action of a special monoid in the microsymplectic
category, the energy monoid, on the hamiltonian system's phase-space;
enhancement and quantization of the action symplectic micromorphism
recovers the usual unitary Schr\"odinger flows of quantum mechanics.
We also explain how symmetries can be modeled very naturally in this
framework, both at the classical and quantum level.

\section{Categories of Fourier integral operators\label{sec:Categories-of-Fourier}}

In this section, we give a brief presentation of the theory of Fourier
integral operators. We focus mostly on the categorical and geometrical
aspects of this calculus since they are the main concern for us in
this paper, referring the reader to standard texts \cite{Duistermaat1996,DH1972,GS1977,Hormander1971,Treves1981}
on the subject for the more analytical aspects.

\subsection{Categories of canonical relations}

\subsubsection{The symplectic ``category''}

Let $M_{i}$, $i=1,2,3$, be three symplectic manifolds, and let $L_{i}$
($i=1,2$) be a canonical relation from $M_{i}$ to $M_{i+1}$, i.e.,
a closed lagrangian submanifold of the symplectic manifold product $\overline{M}_{i}\times M_{i+1}$,
where $\overline{M}$ denotes the symplectic manifold with opposite
symplectic form $-\omega_{M}$. One can compose $L_{1}$ with $L_{2}$
as binary relations yielding the subset $L_{2}\circ L_{1}$ of $\overline{M_{1}}\times M_{3}$.
This composition fails in general to be a lagrangian submanifold,
or even a submanifold; however, there are many examples for which
it does. For instance, we have the well know proposition:

\begin{prop}
 A sufficient condition for the set-theoretic composition of the canonical relations $L_{1}$ and $L_{2}$ to be a canonical relation
 is that the intersection in $\overline{M_1} \times M_2 \times \overline{M_2}
 \times M_3$ of $L_{1}\times L_{2}$
with $M_{1}\times\Delta_{M_{2}}\times M_{3}$ (where $\Delta_M$ denotes the diagonal in $M\times M$) is transversal and properly
embedded in $M_{1}\times M_{3}$ via the canonical factor projection.
In this case, we say that the canonical relations have \textbf{strongly
tranverse composition}. 
\end{prop}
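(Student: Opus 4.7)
The plan is to realize $L_2 \circ L_1$ as the image of a transverse intersection under a canonical projection, and then verify in turn that it is a smooth submanifold, then a closed embedded one, and finally a lagrangian one. Set $I := (L_1 \times L_2) \cap (M_1 \times \Delta_{M_2} \times M_3)$, the intersection taken inside the ambient $\overline{M_1} \times M_2 \times \overline{M_2} \times M_3$, and let $\pi$ denote the projection onto the first and fourth factors. Unwinding the definitions gives $L_2 \circ L_1 = \pi(I)$. The transversality hypothesis, together with the usual intersection dimension formula, yields
\begin{equation*}
\dim I \;=\; \dim(L_1 \times L_2) + \dim(M_1 \times \Delta_{M_2} \times M_3) - \dim(\overline{M_1} \times M_2 \times \overline{M_2} \times M_3) \;=\; \tfrac{1}{2}(\dim M_1 + \dim M_3),
\end{equation*}
so $I$ is a smooth submanifold of exactly the dimension expected of a lagrangian in $\overline{M_1} \times M_3$. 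The hypothesis that $\pi|_I$ is a proper embedding into $M_1 \times M_3$ then immediately promotes $\pi(I)$ to a closed smooth submanifold of the same dimension.

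It remains only to check that $\pi(I)$ is isotropic, after which the dimension count above will force it to be lagrangian. I would do this by a direct tangent calculation. A vector $v \in T_p I$ belongs to both $T(M_1 \times \Delta_{M_2} \times M_3)$ and $T(L_1 \times L_2)$, hence has the form $v = (v_1, v_2, v_2, v_3)$ with $(v_1, v_2) \in TL_1$ and $(v_2, v_3) \in TL_2$. For a second such vector $w = (w_1, w_2, w_2, w_3)$, the symplectic form on $\overline{M_1} \times M_3$ evaluates $(\pi_* v, \pi_* w)$ to $-\omega_1(v_1, w_1) + \omega_3(v_3, w_3)$; the lagrangian conditions for $L_1$ and $L_2$ give $-\omega_1(v_1, w_1) = -\omega_2(v_2, w_2)$ and $\omega_3(v_3, w_3) = \omega_2(v_2, w_2)$ respectively, and the two contributions cancel.

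I anticipate no serious obstacle: the main hypotheses are tailored exactly to the geometric steps above, and the isotropy check is a two-line manipulation. The only delicate point is the sign bookkeeping around the $\overline{\phantom{M}}$ operation on the two copies of $M_2$, which is what makes the two $\omega_2$ terms cancel rather than add; at a higher level this is the fact that $M_1 \times \Delta_{M_2} \times M_3$ is coisotropic in the ambient symplectic manifold with symplectic reduction $\overline{M_1} \times M_3$ and reduction map $\pi$, so the result can be viewed as a particular instance of the general principle that clean coisotropic reduction of a lagrangian gives a lagrangian.
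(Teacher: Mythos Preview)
Your argument is correct and is the standard one. Note, however, that the paper does not actually supply a proof of this proposition: it is introduced as ``the well know proposition'' and stated without justification, being a classical fact from the theory of canonical relations. So there is nothing to compare your approach against; you have simply filled in the omitted details in the expected way. The dimension count, the use of the proper-embedding hypothesis to get a closed submanifold, and the isotropy verification via the cancellation of the $\omega_2$ terms are all correct, and your closing remark identifying this with coisotropic reduction is the right conceptual frame.
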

We denote by $\ExtSympl$ the (extended) symplectic ``category''
whose objects are cotangent bundles and whose morphisms are taken
to be canonical relations between them. The quotation marks are there to stress
that this is not a category in the usual sense since composition is not
always defined. However, this ``category'' contains a honest subcategory
formed by the cotangent lifts as described below.

\subsubsection{Schwartz transform and cotangent lifts}

The canonical relations from $\Cot M$ to $\Cot N$ can be put in
one-to-one correspondence with the lagrangian submanifolds of $\Cot(M\times N)$
via the Schwartz transform (see \cite{BW1997}),\begin{eqnarray*}
\mathcal{S}:\overline{\Cot M}\times\Cot N & \longrightarrow & \Cot(M\times N),\end{eqnarray*}
which is the symplectomorphism that sends $\big((p_{1},x_{1}),\,(p_{2},x_{2})\big)$
to $(-p_{1},p_{2},x_{1},x_{2})$. Now, to any smooth map $M\overset{\phi}{\leftarrow}N$,
we can associate a special canonical relation, its cotangent lift,
which we will consider going in the opposite direction to $\phi$,\begin{eqnarray*}
\Cot\phi:\Cot M & \longrightarrow & \Cot N,\end{eqnarray*}
by pulling back the conormal bundle%
\footnote{Recall that the conormal bundle $N^{*}S$ of a submanifold $S\subset X$
is the lagrangian submanifold of $\Cot X$ consisting of the covectors to
$X$ based along S and vanishing on $TS$.%
} of $\graph\phi$, seen as a submanifold of $\Cot(M\times N)$, via
the Schwartz transform:\[
\Cot\phi:=\Big\{\Big(\big(p_{1},\,\phi(x_{2})\big),\,\big((T_{x_{2}}\phi)^{*}p_{1},\, x_{2}\big)\Big):\,(p_{1},x_{2})\in\phi^{*}(\Cot M)\Big\}.\]
The collection $\mathcal{C}$ of all cotangent lifts is a subcategory
of $\ExtSympl$ which is a true category; namely, we always have that\[
\Cot\phi_{2}\circ\Cot\phi_{1}=\Cot(\phi_{1}\circ\phi_{2}).\]

\subsection{Generating families}

\subsubsection{Generating functions}

An \textbf{exact lagrangian embedding} $\lambda:\Sigma\hookrightarrow\Cot X$
is a lagrangian embedding for which $\lambda^{*}\theta=dS$ for some
$S\in C^{\infty}(\Sigma)$ , where $\theta$ is the Liouville $\mbox{1}$-form
on $\Cot X$. 

Composing $\lambda$ with the bundle projection $\pi:\Cot X\rightarrow X$,
we obtain a map $\pi_{\Sigma}$ from $\Sigma$ to $X$. \textbf{Antecaustic
points} are elements of $\Sigma$ at which $T\pi_{\Sigma}$ is not an isomorphism.\footnote{We have chosen this term because the images of these points under $\pi_\Sigma$ are known as caustic points.  (Although the antecaustic points play a key role in symplectic geometry, we have not found another concise term for them in the literature.)} 

When $\pi_{\Sigma}$ is a diffeomorphism, we say that the lagrangian
submanifold $\lambda(\Sigma)$ is \textbf{projectable}, in which case
the differential $dS_{\Sigma}:X\rightarrow\Cot X$ of $S_{\Sigma}:=S\circ\pi_{\Sigma}^{-1}$
parametrizes the lagrangian submanifold $\lambda(\Sigma)$. The function
$S$ is called a \textbf{generating function} of the lagrangian submanifold.
(It is well-defined up to a constant on each component of $\Sigma.$)
Conversely, to any function $S\in C^{\infty}(X)$, we can associate
the projectable lagrangian submanifold $\im~dS\subset\Cot X$ that
has $S$ as a generating function; explicitly,\[
\im~dS:=\big\{(dS(x),x):\, x\in X\big\}.\]

There are, however, many interesting non-projectable lagrangian submanifolds.
For instance, the conormal bundle $N^{*}C$ of a (non-open) submanifold $C\subset X$
is highly non-projectable in the sense that all points are antecaustic
points: if we consider the lagrangian embedding given by the inclusion\[
\iota_{C}:N^{*}C\longrightarrow\Cot X,\]
we see that the preimage of $\pi\circ\iota_{C}$ at any point $c\in C$
consists of the whole fiber $N_{c}^{*}C$. As a consequence cotangent 
lifts are also non-projectable, since they are conormal bundles to the graph of
the underlying map.

For these lagrangian submanifolds with antecaustic points, there is still
a notion of generating function. The price to pay, however, is the
introduction of additional variables for the generating functions
through a fibration $p:B\rightarrow X$ that {}``unfolds'' the lagrangian
submanifold at antecaustic points. This leads to the notion of generating
family.

\subsubsection{Generating families}

A submersion  $p:B\rightarrow X$ together with a smooth function $S\in C^{\infty}(B)$
defines two lagrangian submanifolds: the lagrangian submanifold $\im~ dS$
in $\Cot B$ whose generating function is $S$ (we regard it as a
canonical relation from the point to $\Cot B$) and the cotangent
lift $\Cot p$, which we can see as a canonical relation from $\Cot B$
to $\Cot X$. If these lagrangian submanifolds have a strongly transversal
composition\footnote{Recall that the composition is called {\bf transversal} if
the product $\im~dS \times T^*p$ of these canonical relations has transversal intersection with the diagonal
$\Delta_{T^*B}\times T^*X$ in $T^*B \times \overline{T^*B} \times T^*X$ (we ignore the point in the definition of $\im~dS$), and {\bf strongly transversal} if the image of this transversal intersection embeds properly in $T^*X$ under the natural projection. When the composition is transversal, $S$ is also called a {\bf Morse family} of functions over $X$.} their composition is a lagrangian submanifold of $\Cot X$.  
A \textbf{generating family} for a lagrangian submanifold $L$ in
$\Cot X$ is a triple $(B,p,S)$ as above such that\[
L=\Cot p\circ\im~dS.\]
Given a function $S\in C^{\infty}(B)$ and a fibration $p:B\rightarrow X$,
the canonical relations $\im~dS$ and $\Cot p$ 
have a strongly transversal composition if and only if the
first factor projection $\pi_{B}$ of $\Cot p$ on $\Cot B$ is transversal
to $\im~dS$ and the second factor projection $\pi_{X}$ of $\Cot p$
on $\Cot X$ becomes a proper embedding when restricted to the points
$y\in\Cot p$ such that $\pi_{B}(y)\in\im~dS$. Observe that the
intersection $\im~\pi_{B}\cap\im~dS$ consists of the points such
that\begin{equation}
(dS(b),b)=(\Cot_{b}p(\eta),b),\label{eq:crit sub cond}\end{equation}
for some $\eta\in\Cot_{\pi(b)}X$. Since $\im~\pi_{B}$ is the annihilator
of the vertical bundle of the fibration $p$ (i.e. all covectors vanishing
on the subbundle $\ker p_{*}$ of $TB$), a point of $(dS(b),b)$
is in $\im~\pi_{B}$ iff the vertical part of $dS$ vanishes%
\footnote{$i^{*}\circ dS(b)=0$, where $i^{*}$ is the dual of the inclusion
$i$ of the vertical subbundle $\ker p_{*}$ into $TB$. %
} at $b$. In the  case of strongly transversal composition, the set $\Sigma$
of all points in $B$ where this happens is a submanifold, called
the \textbf{fiber} \textbf{critical submanifold} of the generating
family. The smooth map\begin{eqnarray*}
\lambda:\Sigma & \longrightarrow & \Cot X\\
b & \mapsto & (\eta,p(b))\end{eqnarray*}
with $\eta$  defined by equation \eqref{eq:crit sub cond} is
a lagrangian embedding whose image is exactly $\Cot p\circ\im~dS$.

Since the standard construction of a generating family for a conormal
bundle $N^{*}C$ out of a tubular neighborhood of $C\subset X$, is
central for us, we spell it out in the following example (\cite{BW1997}). 
\begin{example}
\label{sub:ConormBundle}To begin, we fix a \textbf{tubular neighborhood}
of $C$, that is, the data $(V,\Psi)$ of a neighborhood $V$ of $C$
in $X$ equipped with a diffeomorphism $\Psi$ from a neighborhood
$U$ of the zero section of the normal bundle $NC$ into $V$ that
maps the zero section identically to $C$. We denote by $U_{c}$ the
restriction of $U$ to the fiber $N_{c}C$ and, correspondingly, by
$\Psi_{c}$ the restriction of $\Psi$ to $U_{c}$. This allows us
to map a neighborhood of the zero section of the bundle $N^{*}C\oplus NC$
over $C$ diffeomorphically into an open submanifold of $N^{*}C\times X$
as follows:\begin{eqnarray*}
(p,v,c) & \longmapsto & (p,\Psi_{c}(v)),\end{eqnarray*}
where $p\in N_{c}^{*}C$ and $v\in N_{c}C$. This gives us a generating
family $(B_{C}^{\Psi},p_{C},S_{C})$ for $N^{*}C$, where $B_{C}^{\Psi}$
is defined as the image of this mapping. We will denote by $(p,x,c)$
points in $B_{C}^{\Psi}$. The fibration $p_{C}:B_{C}^{\Psi}\rightarrow V$
is the projection $(p,x,c)\mapsto x$, and the generating function
is given by the canonical pairing\begin{eqnarray*}
S_{C}(p,x,c) & = & \big\langle p,\Psi_{c}^{-1}(x)\big\rangle.\end{eqnarray*}
The critical submanifold $\Sigma_{C}$ of $S_{C}$ then consists of
the points of the form $(p,c,c)$ where $c\in C$ and $p\in N_{c}C$.
This yields an embedding\[
\tau_{C}:N^{*}C\longrightarrow B_{C}^{\Psi},\]
whose image is exactly $\Sigma_{C}$, which has an obvious retraction
$r_{C}(p,x,c)=(p,c)$. Composing $\tau_{C}$ with the lagrangian embedding\[
\lambda_{C}:\Sigma_{C}\longrightarrow\Cot X,\]
generated by the generating family, we obtain the canonical inclusion
$\iota_{C}$ of $N^{*}C$ into $T^{*}X$. 
\end{example}

\subsection{Half-densities}

\subsubsection{$\alpha$-densities and the intrinsic Hilbert space}

An $\alpha$-density for $\alpha\in\C$ on a $n$-dimensional real
vector space $V$ is a  map $\rho$ from the space of frames
$\Frame(V)$ (a frame is an ordered basis $\e=(e_{1},\dots,e_{n})$)
to $\C$ such that\begin{eqnarray*}
\rho(\e\cdot A) & = & |\det A|^{\alpha}\rho(\e)\end{eqnarray*}
for any matrix $A\in GL(n)$. We denote by $|V|^{\alpha}$ the one-dimensional
complex vector space of $\alpha$-densities on $V$.  

Given a finite dimensional vector bundle $E$ over a smooth manifold
$X$, we denote by $|E|^{\alpha}$ the complex line bundle over $M$
whose fiber at $x\in X$ is $|E_{x}|^{\alpha}$. We will reserve the
notation $|\Omega|^{\alpha}(X)$ for the space of smooth sections
of $|TM|^{\alpha}$; its subspace of compactly supported sections
will be denoted by $|\Omega|_{c}^{\alpha}(X)$. For general density
bundles $|E|^{\alpha}\rightarrow X$, we will use the standard notation
$\Gamma(X,|E|^{\alpha})$ to denote the section space. 

An exact sequence of vector bundles\[
0\longrightarrow A\longrightarrow B\longrightarrow C\longrightarrow0\]
over a manifold $X$ induces a canonical isomorphism between the density
bundles $|B|^{\alpha}\simeq|A|^{\alpha}\otimes|C|^{\alpha}$ as well
as their corresponding section spaces. In particular, we can identify 
$|\Omega|^{\frac{1}{2}}(M)\otimes|\Omega|^{\frac{1}{2}}(M)$ with
the space $|\Omega|^{1}(M)$ of $1$-density bundle sections on $M$.
These sections can be integrated, which allows us to give to the vector
space $|\Omega|_{c}^{\frac{1}{2}}(M)$ of compactly supported half-densities
the structure of a pre-Hilbert space with the symmetric bilinear form\begin{eqnarray*}
\langle\mu,\nu\rangle & := & \int_{M}\overline{\mu}\nu,\end{eqnarray*}
where $\bar{\mu}$ is the complex conjugate of the half-density $\mu$.
The completion of this pre-Hilbert space is usually called the intrinsic
Hilbert space of $M$ (see \cite{BW1997} for more details), and we
will denote it by $\mathcal{H}(M)$.

\subsubsection{Integrating half-densities over fibrations \label{sub:Averaging-half-densities}}

Let $p:B\rightarrow X$ be a fibration. There is a notion of {}``pushforward''
from half-densities on $B$ to half-densities on $X$ that requires
some extra data in the form of a section of $|\ker p_{*}|^{\frac{1}{2}}\rightarrow X$,
where $\ker p_{*}$ the vertical bundle of the fibration. It goes
as follows: First observe that the exact sequence\[
0\longrightarrow\ker p_{*}\longrightarrow TB\longrightarrow p^{*}(TX)\longrightarrow0,\]
of vector bundles over $B$ induces the canonical isomorphism\begin{eqnarray}
|\Omega|^{\frac{1}{2}}(B) & \simeq & \Gamma(B,|\ker p_{*}|^{\frac{1}{2}})\otimes\Gamma(B,|p^{*}(TX)|^{\frac{1}{2}}).\label{eq: can iso B}\end{eqnarray}
Now, suppose that we are given a smooth family of half-densities $\rho(x)\in|\Omega|^{\frac{1}{2}}(p^{-1}(x))$
compactly supported on the fibers of $p$. The data of $\rho$ allows us to map half-densities
on $B$ to half-densities of $M$ by integrating them on the fibers
of $p$. Namely, $\rho$ can be regarded as a section of the half
density bundle $|\ker p_{*}|^{\frac{1}{2}}\rightarrow X$, and, in
the light of \eqref{eq: can iso B}, we can regard the tensor product
$\mu\otimes\rho$ for any half-density $\mu$ on $B$ as living in
\[
\Gamma(B,|\ker p_{*}|^{1})\otimes\Gamma(B,|p^{*}(TX)|^{\frac{1}{2}}).\]
The restriction of $\mu\otimes\rho$ to the fibers of $p$ gives thus
a family of densities on the fiber $p^{-1}(x)$ with values in the
fixed vector space $|T_{x}X|^{\frac{1}{2}}$. Therefore, we can integrate $\mu\otimes\rho$
on this fibers and obtain\begin{eqnarray*}
\theta(x) & := & \int_{p^{-1}(x)}\mu\otimes\rho\in|T_{x}X|^{\frac{1}{2}},\end{eqnarray*}
which is a half-density on $X$.

\subsection{Fourier integral operators}

We describe here special classes of FIO that are of concern for us.
For a more general presentation, we refer the reader to the standard
references \cite{DH1972,Duistermaat1996,Hormander1971}. We 
begin by outlining the main ingredients out of which FIO's are made and
by commenting on the general problem of FIO composition, which parallels
the ill-defined composition of canonical relations.

\subsubsection{The FIO {}``category'' }

Given a canonical relation $L$ from $\Cot X$ to $\Cot Y$, one can associate a class of operators $\Four_{\hbar}(L;X,Y)$,
the Fourier integral operators with \textbf{wave front} $L$, from
the intrinsic Hilbert space $\mathcal{H}(X)$ to the intrinsic Hilbert space
$\mathcal{H}(Y)$. More precisely, an operator $Q$ in this class is 
a family $Q=\{Q_{\hbar}:\,\hbar\in (0,1]\}$ of operators depending
smoothly on a parameter $\hbar$ in a way that will be made clear
later on. We will be mostly concerned with the asymptotics of these
operators in the \textbf{semiclassical limit, }that is, when $\hbar\rightarrow0$. 
For us, a \textbf{semiclassical FIO} will mean an equivalence class
of FIO's that have the same expansion in $\hbar$ at all orders. The space of all FIO's
between $X$ and $Y$ will be denoted by $\Four_{\hbar}(X,Y)$ and
the space of semiclassical FIO's by $\SFour(X,Y)$. We will come back
to the latter at the end of this section. 

An FIO can be defined explicitly in terms of an oscillatory integral
representation. For this, we need to fix a generating family for the
wave front as well as some {}``vertical'' half-density on the total
space of the generating family. We refer the reader to \cite{GS1977}
for the general description of these representations. 

In general, the composition of two FIO's fails to be a FIO and, therefore,
the collection $\Four_{\hbar}$ of all FIO's only form a {}``category''
in the same sense as $\ExtSympl$ does. Actually, this is more than
an mere analogy. Namely, we can define the map\[
WF:\Four_{\hbar}(X,Y)\longrightarrow\ExtSympl(\Cot X,\Cot Y)\]
that associates to an FIO $T$ its wave front $\WF(T)$. Now, it is
a well know-result (\cite{GS1977}) that when the wave fronts of two
FIO's $T_{1}$ and $T_{2}$ have strongly transversal composition, then the
composition of the operators themselves yields a FIO whose wave front
is given by the composition of the wave fronts: \[
\WF(T_{2}\circ T_{1})=\WF(T_{2})\circ\WF(T_{1}).\]
Moreover, the wave front generating families also compose: if $(B_{i},p_{i},S_{i})$
is a wave front generating family for $T_{i}\in\Four_{\hbar}(M_{i},M_{i+1})$
$(i=1,2)$, then the fibration\[
B_{1}\times_{M_{2}}B_{2}\longrightarrow M_{1}\times M_{3}\]
together with the generating function\begin{eqnarray*}
(S_{1}+S_{2})(b_{1},b_{2}) & = & S_{1}(b_{1})+S_{2}(b_{2})\end{eqnarray*}
is the generating family for $\WF(T_{2}\circ T_{1})$. Therefore,
whenever we can find a subcategory $\mathcal{L}$ of canonical relations
in $\ExtSympl$ with strongly transversal composition, we obtain an honest category
$\Four_{\hbar}^{\mathcal{L}}$ of operators formed by the FIO's having
canonical relations in $\mathcal{L}$ as wave front. In this case
the wave front map becomes a functor\[
\WF:\Four_{\hbar}^{\mathcal{L}}\longrightarrow\mathcal{L}\]
between the corresponding categories. 
\begin{example}
For instance, the category $\mathcal{C}$ of cotangent lifts has an
associated category of FIO's, for which the \textquotedbl{}kernel\textquotedbl{}
of the wave front functor $\WF$ assigns to each identity morphism
$\Cot\id_{X}$ in $\mathcal{C}$ the algebra of pseudodifferential
operators on $X$. 
\end{example}

\subsubsection{Integral representation\label{sub:Integral-representation}}

We now give a more descriptive presentation of a FIO class whose wave
fronts have generating families with very good properties. In particular,
this class encompasses the FIO's on cotangent lifts
as we will see in details in the next section. 
\begin{assumption*}
Let $(B,p,S)$ be a generating family for a canonical relation from
$\Cot X$ to $\Cot Y$. We denote by $p_{X}$ and $p_{Y}$ the compositions
of the generating family fibration $p:B\rightarrow X\times Y$ with
the canonical projections on the corresponding factors. From now on,
we will assume that:

(1) The manifold $B$ is fibered over its critical submanifold $\pi_{\Sigma}:B\rightarrow\Sigma$,
and any fiber $\pi_{\Sigma}^{-1}(s)$ can be identified with a neighborhood
of $p_{X}(s)$ in $X$. 

(2) The generating function $S:B\rightarrow\R$ has a unique non-degenerate
critical point on the fiber $p_{Y}^{-1}(y)$ for each $y\in Y$.
The set $Z$ of all these critical points is a submanifold of $\Sigma$. 
\end{assumption*}
This assumption implies in particular that a half-density $\Psi$
on $X$ induces a family $\Psi(s)\in|\Omega|^{\frac{1}{2}}(\pi_{\Sigma}^{-1}(s))$
by considering the restrictions of $\Psi$ to neighborhood of $p_{X}(s)$
in $X$. Thus, given another half-density \[
a\in|\Omega|^{\frac{1}{2}}(L)\]
on the canonical relation $L$ generated by $(B,p,S)$, that we transport
with the lagrangian embedding $\lambda$ to the critical submanifold
$\Sigma$ itself, we can identify the tensor product $\Psi\otimes\lambda^{*}a$
with a half-density on $B$. Now, suppose we are given a section $\rho$
of the half-density vertical bundle $|\ker(p_{Y})_{*}|^{\frac{1}{2}}\rightarrow B$.
With this extra data, we can define an operator\[
Q_{\hbar}(a,B):|\Omega|^{\frac{1}{2}}(X)\rightarrow|\Omega|^{\frac{1}{2}}(Y),\]
by fiber integration as explained in paragraph \ref{sub:Averaging-half-densities}:
\begin{equation}
\big(Q_{\hbar}(a,B)\Psi\big)(y):=\int_{p_{Y}^{-1}(y)}\lambda^{*}a\otimes\Psi\otimes\rho\, e^{\frac{i}{\hbar}S}.\label{eq:integral-representation}\end{equation}
This integral representation can be taken as a definition for the
FIO's in $\Four_{\hbar}(L;X,Y)$ provided that the wave front $L$
has a generating family with the properties as above. Note that
any other choice of $\rho$ would yield the same class of FIO. 

This integral representation makes it clear that, once a section
$\rho$ has been fixed, we have a map, called the \textbf{total symbol
map}, \[
\sigma_{B,\rho}:\Four_{\hbar}(L;X,Y)\rightarrow|\Omega|^{\frac{1}{2}}(L),\]
that associates to the operator $T$ its \textbf{total symbol}, that
is, the half-density $a$ on its wave front $L$. 
\begin{rem}
Note that the total symbol is not an invariant of the FIO since
it depends on a choice of a generating family as well as on the choice of
the vertical half-density section $\rho$.
\end{rem}

\subsection{The semiclassical limit}

\subsubsection{The semiclassical intrinsic Hilbert space\label{sub:The-semiclassical-intrinsic}}

We say that a smooth map $\hbar\mapsto f_{\hbar}$ from the parameter
space $(0,1]$ to a normed vector space $V$ is of order $\hbar^{\infty}$
(and we will write $f_{\hbar}\in\mathcal{O}(h^{\infty}))$ if, for
each positive integer $N$, there
is a real positive constant $C_{N}$ such that$ $$\left\Vert f(\hbar)\right\Vert \leq C_{N}\hbar^{N}$. Two paths are equivalent if their difference
is of order $\hbar^{\infty}$. We denote by $V_{\hbar}$ the quotient
space. Whenever $V$ is finite dimensional, the Borel summation Theorem
allows us to identify $V_{\hbar}$ with the space $V[[\hbar]]$
of formal power series in $\hbar$ with coefficients in $V$ by taking
the Taylor series of $f_{\hbar}$ at $0$. In the particular cases when
$V=\C$ or $\R$, we will mostly prefer the interpretation of the
asymptotic spaces $\C_{\hbar}$ and $\R_{\hbar}$ in terms of the
corresponding rings of formal power series $\R[[\hbar]]$ and $\C[[\hbar]]$.

Now let $a_{\hbar}$ and $b_{\hbar}$ be sections of a finite dimensional
vector bundle $E\rightarrow X$ depending smoothly on a parameter
$\hbar\in(0,1]$. We will say that $a_{\hbar}$ and $b_{\hbar}$ are
equivalent modulo $\hbar^{\infty}$ if the difference between their
local representations at any point as well as those of all their derivatives
are of order $\hbar^{\infty}$. In the case of the line bundle of $\alpha$-densities
on $X$, we will write $|\Omega_{\hbar}|^{\alpha}(X)$ for the resulting
quotient space.

The inner product on $|\Omega|^{\frac{1}{2}}(X)$ yields an inner
product on $|\Omega_{\hbar}|^{\frac{1}{2}}(X)$ with values in $\C[[\hbar]]$
in the the sense of \cite{BW2001}.   We will call the resulting inner product space the semiclassical
Hilbert space of the manifold $X$, and we will denote it by $\xH(X)$, though of course
it is not an Hilbert space in the usual sense.
Elements in $\xH(X)$ can be seen as classes of $\hbar$-dependent
half-densities on $X$ that have the same semiclassical limit, that
is, the same $\mathcal{O}(\hbar^{\infty})$ asymptotics when $\hbar\rightarrow0$.
Upon Taylor expansion, we can also regard $\xH(X)$ as the space $|\Omega|^{\frac{1}{2}}(X)[[\hbar]]$
of formal power series in $\hbar$ with coefficients in the half-densities. 

\subsubsection{Oscillatory integrals on microfolds}

Let $\mu_{\hbar}\in|\Omega|^{1}(B)$ be a compactly supported density
on $B$ depending smoothly on $\hbar\in(0,1]$, and let $S:B\rightarrow\R$
be a smooth function whose critical points form a submanifold $Z$
of $B$. The stationary phase theorem tells us that the 
asymptotics modulo $\mathcal{O}(\hbar^{\infty})$ of the oscillatory integral\[
Q_{\hbar}(\mu,S)=\int_{B}\mu_\hbar e^{\frac{i}{\hbar}S},\]
depends only on the behavior of $\mu_\hbar$ and $S$ in a neighborhood of $Z$. To be more precise, we
introduce the following definition:

\begin{defn}
Let $B$ be a manifold and $Z\subset B$ a submanifold. A cut-off
function $\chi:B\rightarrow R$ for $Z$ is a smooth function such
that there exist neighborhoods $U$ and $V$ of $Z$ in $B$ such that
 $\overline U\subset V$ and such
that $\chi_{|U}\equiv1$ and $\chi_{|V^{c}}\equiv0$ where $V^{c}$
is the complement of $V$ in $B$. 
\end{defn}
The stationary phase theorem tells us that $Q_{\hbar}(\mu\chi)$ and
$Q_{\hbar}(\mu\chi')$ are equivalent modulo $\hbar^{\infty}$ for
any two cut-off functions $\chi,\chi':B\rightarrow\R$ for $Z$. For
this reason, any two densities on $X$ having the same germ on $Z$
will have equal oscillatory integrals modulo $\hbar^{\infty}$. This
fact allows us to define semi-classical integrals on manifold germs
or microfolds (see the definition in the Introduction).

\begin{defn}
\label{def:semiclassical-integral}An $\alpha$-density on $[B,Z]$
is an \textbf{$\alpha$-density germ} $[\mu]$ on $B$ around $Z$.
We will use the notation $|\Omega|^{\alpha}([B,Z])$. Let $[\mu]\in|\Omega|^{1}([B,Z])$
be a density germ around $Z$ and let $S:B\rightarrow\R$ be as above.
We define the semiclassical oscillatory integral\[
\SCint\mu e^{\frac{i}{\hbar}S}\]
to be the  equivalence class modulo $\hbar^{\infty}$ of $Q_{\hbar}(\chi\mu)$,
where $\mu\in[\mu]$ and $\chi$ is a cut-off function for $Z$ in
$B$. 
\end{defn}

\subsubsection{Semiclassical Fourier integral operators}

We are now interested in the semiclassical limit of FIO's whose defining
generating families satisfy the assumptions in paragraph \ref{sub:Integral-representation}.
Therefore, instead of considering $Q_{\hbar}(a,B)$ in \eqref{eq:integral-representation}
as a collection of operators indexed by $\hbar\in(0,1]$, we will
see it as single operator\[
Q_{\hbar}(a,B):\xH(X)\longrightarrow\xH(Y)\]
acting on the semiclassical intrinsic Hilbert spaces. Moreover, because
of our assumption that $S$ has a single critical point on each of
the fibers $p_{Y}^{-1}(y)$ and that these critical points form a
submanifold $Z$ of $B$, we see, from the last paragraph, that generating
families $(B,p,S)$ having the same germ around at $Z$ and half-densities
$a$ having the same germ around $\lambda(Z)\subset L$ will yield
operators $Q_{\hbar}(a,B)$ with the same 
asymptotics modulo $\mathcal{O}(\hbar^{\infty})$. In other words, these operators will coincide when looked
upon as acting on the \emph{semiclassical} intrinsic Hilbert spaces.

From the considerations above, it makes sense to introduce germs of
lagrangian submanifolds and generating families to start with. This
motivates the following definition:

\begin{defn}
A generating family for a lagrangian submicrofold $[L,C]$ of a cotangent
bundle $\Cot X$ is a triple $([B,Z],[S],[p])$ for which there is
a representative $(B,S,p)$ that is a generating family for a representative
$L$ of the lagrangian submicrofold and such that

(1) the critical submanifold $\Sigma$ contains $Z$,

(2) the lagrangian embedding $\lambda:\Sigma\rightarrow\Cot X$ maps
 $Z$ diffeomorphically onto $C$.\end{defn}
\begin{example}
As an example of this last definition, let us consider the case of
the conormal bundle $N^{*}C$ seen as the lagrangian submicrofold $[N^{*}C,C]$
of $[\Cot X,X]$.\footnote{From now on, we will write simply $\Cot X$ for $[\Cot X,X]$ when
it is clear from the context that we are interested in the microfold%
} The generating family $(B_{C}^{\Psi},p_{C},S_{C})$ of Example \ref{sub:Conormal-microbundle-deformations}
induces a generating family for the microbundle $N^{*}C$ by taking
germs appropriately. Namely, one easily sees that the information
needed to generate the lagrangian embedding germ\begin{eqnarray*}
[\iota_{C}]:N^{*}C & \longrightarrow & [\Cot X,C]\end{eqnarray*}
is contained in the microfold $[B_{C},Z_{C}]$, where $Z_{C}$ is
the submanifold of points of the form $(0,c,c)$ with $c\in C$, along with the germs\begin{eqnarray*}
[p_{C}]:[B_{C},Z_{C}] & \longrightarrow & [\Cot X,C],\\
{}[S_{C}]:[B_{C},Z_{C}] & \longrightarrow & [\R,0].\end{eqnarray*}
Since $Z_{C}$ is contained in $\Sigma_{C},$ we can take the germ
$[\Sigma_{C},Z_{C}]$, which we will call the {\bf critical submicrofold}
of the generating function germ $[S_{C}]$. As before, we have the
embedding\begin{eqnarray*}
[\tau_{C}]:N_{*}C & \longrightarrow & [B_{C},Z_{C}],\end{eqnarray*}
whose image is $[\Sigma_{C},Z_{C}]$. This data produces the lagrangian
embedding germs\begin{eqnarray*}
[\lambda_{C}]:[\Sigma_{C},Z_{C}] & \longrightarrow & [\Cot X,C],\\
{}[\iota_{C}]:N^{*}C & \longrightarrow & [\Cot X,C],\end{eqnarray*}
where $[\iota_{C}]=[\lambda_{C}\circ\tau_{C}]$ as before. 
\end{example}
Forgetting now everything but the $\mathcal{O}(\hbar^{\infty})$ asymptotics
of our FIO's, we can directly start from the data of a generating
family $([B,Z],[p],[S])$ of a lagrangian submicrofold $[L,C]$ in $\Cot(X\times Y)$
for which a representative satisfies the assumptions in paragraph
\eqref{sub:Integral-representation} and half-density germs $[a]$
on the lagrangian submicrofold $[L,C]$.  In this way, we obtain operators
$Q_{\hbar}([a],[B,Z])$ from $\xH(X)$ to $\xH(Y)$ by replacing the
integral in \eqref{eq:integral-representation} by its semiclassical
version introduced in Definition \ref{def:semiclassical-integral}.
We call these operators the {\bf semiclassical Fourier integral operators},
and we denote their space by $\SFour(X,Y)$. Note that the wave front map $\WF$
now takes its values in lagrangian submicrofolds and the total
symbol map in half-density germs.

\section{Quantization of cotangent lifts\label{sec:Quantization-of-cotangent}}

In this section, we study the semiclassical FIO's associated
to the category $\mathcal{C}$ of cotangent lifts. For this, we need to introduce the notion
of micro (and local) exponential:

\begin{defn}
\label{def:exponential}Let $M$ be a smooth manifold. A \textbf{micro}
\textbf{exponential} is a diffeomorphism germ $[\Psi]:[TM,Z_{M}]\rightarrow M$
that sends  the zero section  
$Z_{M}$ to $M$ and such
that, for each $x$, $T_{0}\Psi_{x}=\id$, where $\Psi_{x}$ is the restriction of
$\Psi$ to $T_{x}M$. A \textbf{local} \textbf{exponential} is a representative
$\Psi\in[\Psi]$ of a micro exponential. We will usually denote the
domain of $\Psi_{x}$ by $U_{x}$ and its range by $V_{x}$. 
\end{defn}

We next show that cotangent lifts are very special in the following
sense:

\begin{prop}
\label{prop-properties}
Let $\Cot\phi:\Cot M\rightarrow\Cot N$ be a cotangent lift to a smooth map $\phi$ from $N$ to $M$ and let
$\Psi:U\subset TM\rightarrow M$ be a local exponential on $M$ as
in Definition \ref{def:exponential}. Then, there is a canonical generating
family $(B_{\phi}^{\Psi},p_{\phi},S_{\phi})$, depending only on the
choice of the local exponential, such that

(1) the assumptions in paragraph \ref{sub:Integral-representation}
hold;

(2) the image of the critical points $Z_{\phi}$ of $S_{\phi}$ on
the $p_{N}$-fibers by the lagrangian embedding is the graph of $\phi$
seen as a submanifold of $\Cot\phi$;

(3) there is a canonical section $\rho_{B}:N\rightarrow|\ker(p_{N})_{*}|^{\frac{1}{2}}$ 
\end{prop}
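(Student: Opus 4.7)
The plan is to specialize the tubular-neighborhood construction of Example \ref{sub:ConormBundle} to the submanifold $C = \graph\phi$ inside $X = M\times N$, since under the Schwartz transform $\Cot\phi$ corresponds to $N^*\graph\phi$. The local exponential $\Psi$ on $M$ is exactly the data needed to build a tubular neighborhood of $\graph\phi$ without any extra choice: the normal bundle $N\graph\phi$ is canonically isomorphic to $\phi^*TM$ via the representatives $(w,0)\in T_{\phi(n)}M\oplus T_nN$ modulo $T_n\graph\phi=\{(T\phi(v),v)\}$, and the map $(w,n)\mapsto(\Psi_{\phi(n)}(w),n)$ is a diffeomorphism germ from a neighborhood of the zero section onto a neighborhood of $\graph\phi$. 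Feeding this tubular neighborhood into Example \ref{sub:ConormBundle} and translating through Schwartz, the generating family takes, in the coordinates $(q_1,w,n)$ with $q_1\in T^*_{\phi(n)}M$ and $w\in U_{\phi(n)}\subset T_{\phi(n)}M$, the simple form
\[
B_\phi^\Psi \subset \phi^*(T^*M\oplus TM),\qquad p_\phi(q_1,w,n)=(\Psi_{\phi(n)}(w),n),\qquad S_\phi(q_1,w,n)=\la q_1,w\ra.
\]

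For statement (1), I would verify the assumptions of paragraph \ref{sub:Integral-representation} by direct inspection of these coordinates. Since $p_\phi$ is independent of $q_1$, the vertical bundle is spanned by $\partial_{q_1}$, so the vertical part of $dS_\phi$ equals $w$ and the critical submanifold is $\Sigma_\phi=\{(q_1,0,n)\}\simeq\phi^*T^*M$. The retraction $\pi_\Sigma(q_1,w,n)=(q_1,0,n)$ then realizes $B_\phi^\Psi$ as a fibration over $\Sigma_\phi$ whose fiber $U_{\phi(n)}$ is identified, via $w\mapsto\Psi_{\phi(n)}(w)$, with a neighborhood of $\phi(n)=p_M\circ p_\phi(q_1,0,n)$ in $M$. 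For the second assumption, restricting to $p_N^{-1}(n)$ reduces $S_\phi$ to the canonical duality pairing on $T^*_{\phi(n)}M\oplus T_{\phi(n)}M$, whose unique critical point is $(0,0)$ and whose Hessian is the (non-degenerate) evaluation pairing; hence $Z_\phi=\{(0,0,n)\}\simeq N$ is a submanifold.

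For statement (2), I would apply the Lagrangian embedding formula of Example \ref{sub:ConormBundle} at a critical point $(0,0,n)\in Z_\phi$. The covector produced lies over $(\phi(n),n)$ with value $(q_1,-(T\phi)^*q_1)=(0,0)$, i.e.\ the zero element of $N^*\graph\phi$ above $(\phi(n),n)$. Pulling this back through the Schwartz transform gives the pair $((0,\phi(n)),(0,n))\in\Cot\phi$, which is precisely $\graph\phi$ embedded as the zero-section locus of $\Cot\phi$.

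For statement (3), I would observe that the fibers of $p_N$ are open subsets of the vector spaces $T^*_{\phi(n)}M\oplus T_{\phi(n)}M$, each of which carries a basis-independent Liouville $1$-density $|dw\wedge dq_1|$: under a linear change of basis on $T_{\phi(n)}M$ the factors transform by $\det A$ and $(\det A)^{-1}$ respectively. Its square root gives a canonical half-density on each fiber, varying smoothly in $n$, and this is $\rho_B$. The main obstacle, if there is one, is bookkeeping rather than substance: one must keep careful track that $B_\phi^\Psi$ is well-defined only as a germ along $Z_\phi$, that the fibration $\pi_\Sigma$ exhibited above is indeed the one abstractly required in paragraph \ref{sub:Integral-representation}, and that each ingredient depends only on $\Psi$ and on nothing further. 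All three parts then follow from linear-algebraic identifications once the coordinates are set up.
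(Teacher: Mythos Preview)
Your proof is correct and follows the same overall strategy as the paper: identify $N\graph\phi\simeq\phi^*TM$, use $\Psi$ to build the tubular neighborhood $(w,n)\mapsto(\Psi_{\phi(n)}(w),n)$, and feed this into the conormal-bundle construction of Example~\ref{sub:ConormBundle}. The one noteworthy difference is your choice of coordinates on $B_\phi^\Psi$. You parametrize by $(q_1,w,n)$ with $w\in T_{\phi(n)}M$ the \emph{linear} variable, whereas the paper parametrizes by $(p_1,x_1,x_2)$ with $x_1=\Psi_{\phi(x_2)}(w)\in M$ the \emph{nonlinear} one; the two are related by the diffeomorphism $w\leftrightarrow\Psi_{\phi(n)}(w)$. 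In your coordinates $S_\phi=\langle q_1,w\rangle$ is the bare pairing and the $p_N$-fiber is literally an open set in the symplectic vector space $T^*_{\phi(n)}M\oplus T_{\phi(n)}M$, so the Liouville half-density is immediately available and part~(3) is a one-liner. In the paper's coordinates the $p_N$-fiber is $T^*_{\phi(x_2)}M\times V_{\phi(x_2)}$ with $V_{\phi(x_2)}\subset M$, so one must transport the symplectic density through an explicit bundle isomorphism $A$ built from $\partial_v\Psi$; this is more work but has the payoff that the resulting integral representation (see Example~\ref{exa:Operator}) visibly specializes to the standard pseudodifferential formula when $\Psi_x(v)=x+v$. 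Your two descriptions of $\rho_B$ agree under the change of variables, as the Jacobian factor $|\det\partial_v\Psi|^{-1/2}$ in the paper's Example~\ref{exa:rhoB} confirms.
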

Let us see some immediate consequences of these good properties.

First of all,  the  $\mathcal{O}(\hbar^{\infty})$ asymptotics of  FIO's from $\xH(M)$ to $\xH(N)$ whose wave front are the cotangent lifts and with integral representation \begin{equation}
\Big(Q_{\hbar}(a,\Cot\phi)u\Big)(x_{2}):=\SCint_{p_{N}^{-1}(x_{2})}u\otimes a\otimes\rho_{B}\, e^{\frac{i}{\hbar}S_{\phi}},\label{eq:QuantFormula}
\end{equation}
where $u\in\xH(M)$ are completely determined by (1) the lagrangian submicrofolds $[\Cot\phi,\graph\phi]$, (2) the corresponding generating family germ
\[
\Big([B_{\phi}^{\Psi},Z_{\phi}],[p_{\phi}],[S_{\phi}]\Big),
\]
and (3) the half-density germs $[a]\in|\Omega_{\hbar}|^{\frac{1}{2}}([\Cot\phi,\graph\phi])$. Moreover, the whole calculus depends only on
the germs of the local exponentials around the zero sections, that
is, the micro exponentials\[
[\Psi]:[\Cot M,Z_{M}]\rightarrow M\]
of Definition \ref{def:exponential}.

Second of all, the class of semiclassical FIO's on cotangent lifts
has very good functorial properties. Since the cotangent lifts form
a category $\mathcal{C}$, their associated semiclassical
FIO's are always composable when sources and targets are compatible, and so the collection $\SFour_{\hbar}^{\mathcal{C}}$ of these FIO's also forms a category, and we have
a wave front functor $$
\WF:\SFour_{\hbar}^{\mathcal{C}}\longrightarrow\mathcal{C},$$
as explained in the previous section. 

The rest of this section is devoted to the proof of Proposition \ref{prop-properties}.
Throughout, $M$ and $N$ will be two smooth manifolds as above whose
points will be denoted  by $x_{1}$ and $x_{2}$ respectively. Correspondingly,
we will write $(p_{1},x_{1})$ and $(v_{1},x_{1})$ to denote points
in $\Cot M$ and $TM$ and $(p_{2},x_{2})$ and $(v_{2},x_{2})$
for points in $\Cot N$ and $TN$. 

\subsection{Canonical identifications }

Let $\phi:N\rightarrow M$ be a smooth map. The conormal bundle of
its graph $N^{*}(\graph\phi)$ can be identified with the pullback
bundle $\phi^{*}(\Cot M)$ via the the lagrangian embedding\begin{eqnarray*}
\iota_{\phi}:\phi^{*}(\Cot M) & \longrightarrow & \Cot(M\times N),\end{eqnarray*}
given by\begin{eqnarray}
\iota_{\phi}(p_{1},x_{2}) & = & \Big(\big(p_{1},-d\phi^{*}p_{1}\big),\big(\phi(x_{2}),x_{2}\big)\Big).\label{eq:cotangent-lift-inclusion}\end{eqnarray}
Let us list here several obvious but crucial identifications, which
will be used extensively in what follows:\begin{eqnarray}
\graph\phi & \simeq & N,\label{eq:graphphi}\\
N^{*}\graph\phi & \simeq & \phi^{*}(\Cot M),\label{eq:Nstargraphphi}\\
N\graph\phi & \simeq & \phi^{*}(TM),\label{eq:Ngraphphi}\end{eqnarray}
and, therefore, the vector bundle $N^{*}\graph\phi\oplus N\graph\phi$
over $\graph\phi$ can be identified with the pullback vector bundle
$\phi^{*}(\Cot M\oplus TM)$ over $N$. From now on, we will not make
a strict distinction between $N^{*}\graph\phi$ and the cotangent
lift $\Cot\phi$, and similarly for the other identifications.

\subsection{Generating families, tubular neighborhoods and micro exponentials\label{sub:MorseFamForCot}}

Example \ref{sub:ConormBundle} shows how to construct a generating
family for a general conormal bundle once a tubular neighborhood for
the submanifold has been given. Since cotangent lifts are essentially
conormal bundles, this construction works also for them. However,
we want to discuss a special class of tubular neighborhoods in the
context of cotangent lifts, which will prove to be very convenient
for us. 

\begin{rem}
A connection $\nabla$ on $M$ gives rise to a micro exponential by
taking the germ of the connection's exponential map $\exp^{\nabla}$. If
we replace germs by jets in the previous definition, we obtain what
is called a \textbf{formal} \textbf{exponential} in \cite{WE93} in
the context of Fedosov star-products. A micro exponential produces
a {}``micro linearization'' of the manifold $M$, that is, a germ\begin{eqnarray*}
[l]:[M\times M,\Delta_{M}] & \longrightarrow & [\Cot M,Z_{M}],\end{eqnarray*}
given in representatives by $l(x,y)=v$, where $\Psi_{x}(v)=y$. The
notion of manifold linearization has been introduced in \cite{Bokobza69}
in order to extend the pseudo-differential operator calculus on
$\R^{n}$ to general manifolds. Micro exponentials are also related
to Milnor's construction of tangent microbundles \cite{Milnor1964}.
\end{rem}
Returning to cotangent lifts, we can now easily construct a tubular
neighborhood for the graph of $\phi:N\rightarrow M$ out of the extra
data of a micro exponential $[\Psi]$ on $M$ only. For the neighborhood
itself, we take\[
V_{\phi}:=\bigcup_{x_{2}\in N}\Psi(U_{\phi(x_{2})})\times\phi^{-1}(\Psi(U_{\phi(x_{2})})),\]
where $U_{x_{1}}$ is the domain of $\Psi_{x_{1}}$ for a fixed representative
$\Psi\in[\Psi]$. Now, using the identification of the graph normal
bundle with the pull back $\phi^{*}(TM)$, we obtain the tubular
neighborhood diffeomorphism germ from $N\graph\phi$ to $V_{\phi}$
given explicitly by\begin{eqnarray*}
(v_{1},x_{2}) & \longmapsto & \big(\Psi_{\phi(x_{2})}(v_{1}),x_{2}\big).\end{eqnarray*}
We will denote this map again by $\Psi$ in order to keep the notation
simple and to acknowledge the micro exponential dependence in the
tubular neighborhood notation $(V_{\phi},\Psi)$.

Now, repeating the generating family construction for conormal bundles
given in Example \ref{sub:Conormal-microbundle-deformations} with
the tubular neighborhood $(V_{\phi},\Psi)$, we obtain a generating
family $(B_{\phi}^{\Psi},p_{\phi},S_{\phi})$ for the cotangent lift
$\Cot\phi$. In very explicit terms, we have that\begin{eqnarray*}
B_{\phi}^{\Psi} & = & \Big\{(p_{1},x_{1},x_{2})\,:p_{1}\in\Cot_{\phi(x_{2})}M,\, x_{1}\in\im\Psi_{\phi(x_{2})},\, x_{2}\in N\Big\},\\
p_{\phi}(p_{1},x_{1},x_{2}) & = & (x_{1},x_{2}),\\
S_{\phi}(p_{1},x_{1},x_{2}) & = & \big\langle p_{1},\Psi_{\phi(x_{2})}^{-1}(x_{1})\big\rangle,\\
\Sigma_{\phi} & = & \Big\{(p_{1},\phi(x_{2}),x_{2})\,:p_{1}\in\Cot_{\phi(x_{2})}M,\, x_{2}\in N\Big\}.\end{eqnarray*}
Again, we denote by $\tau_{\phi}$ the embedding of $\phi^{*}(\Cot M)$
into $B_{\phi}^{\Psi}$ and by $\lambda_{\phi}$ the lagrangian embedding
of $\Sigma_{\phi}$ into $\Cot(M\times N)$. The composition $\iota_{\phi}:=\lambda_{\phi}\circ\tau_{\phi}$
yields the usual inclusion \eqref{eq:cotangent-lift-inclusion}. 

Let us check now that this generating family satisfies the assumptions
of paragraph \ref{sub:Integral-representation}. First of all, observe
that we have a retraction $\pi_{\Sigma}$ from $B_{\phi}^{\Psi}$
onto the critical submanifold $\Sigma$ given by\[
\pi_{\Sigma}:(p_{1},x_{1},x_{2})\mapsto(p_{1},\phi(x_{2}),x_{2}),\]
the fiber of which is the neighborhood $V_{x_{1}}$ of $x_{1}$
in $M$ determined by the exponential $\Psi$. This shows the first
part of the assumption. Now, if we consider the restriction of $S_{\phi}$
on the fiber\[
p_{N}^{-1}(x_{2})=\Cot_{\phi(x_{2})}M\times\Psi_{\phi(x_{2})}(U_{\phi(x_{2})}),\]
and if compute its critical point there, we see that the vanishing
of\[
\frac{\partial S_{\phi}}{\partial p_{1}}(p_{1},x_{1},x_{2})=\Psi_{\phi(x_{2})}^{-1}(x_{1})\]
on the fiber implies that $\Psi_{\phi(x_{2})}^{-1}(x_{1})=0,$ which
is equivalent to $x_{1}=\phi(x_{2})$. In turn, the vanishing of\[
\frac{\partial S_{\phi}}{\partial x_{1}}(p_{1},x_{1},x_{2})=\langle p_{1},T_{x_{1}}\Psi_{\phi(x_{2})}^{-1}\rangle\]
at $x_{1}=\phi(x_{2})$ implies that $p_{1}=0$ since the derivative
$T_{\phi(x_{2})}\Psi_{\phi(x_{2})}^{-1}$ is the identity by definition
of the exponential. This shows that $S_{\phi}$ has a single critical
point on $p_{N}^{-1}(x_{2})$ given by $(0,\phi(x_{2}),x_{2})$. We
see that the collection of all these critical points is the following
submanifold \[
Z_{\phi}:=\Big\{(0,\phi(x_{2}),x_{2}):\; x_{2}\in N\Big\},\]
which is contained in the (usual) critical submanifold $\Sigma_{\phi}$
of the generating family. Finally, we see that the lagrangian embedding
$\lambda_{\phi}$ carries $Z_{\phi}$ to the graph of $\phi$, seen
as a submanifold of the cotangent lift $\Cot\phi$. 

We see then that the semiclassical limit of an FIO whose wave front is a cotangent
lift is controlled by the lagrangian microfold $[\Cot\phi,\graph\phi]$
(which we will continue to denote  simply by $\Cot\phi$)
and its generating family germ \begin{equation}
\Big([B_{\phi}^{\Psi},Z_{\phi}],[p_{\phi}],[S_{\phi}]\Big).\label{eq:cotangent-lift-microfamily}\end{equation}
\subsection{The canonical half-density $\rho_{B}$}

The fibration $p_{N}:B_{\phi}^{\Psi}\rightarrow N$  has a section, given by $s_{N}(x_{2})=(0,\phi(x_{2}),x_{2})$, whose image is $Z_{\phi}$. We will exhibit a canonical section $\rho_{B}$
of the half-density bundle\begin{eqnarray}
|\ker(p_{N*})|^{\frac{1}{2}} & \longrightarrow & B_{\phi}^{\Psi}\label{eq:can-dens-bundle}\end{eqnarray}
associated to the vertical bundle of this projection. 

Let us introduce the shorthand $\T M$ for the vector bundle $\Cot M\oplus TM$
over $M$. As a first step, observe that the half-density bundle $|\T M|^{\frac{1}{2}}$
has a canonical section $\rho_{M}$ obtained fiberwise from the canonical
symplectic form on $\T_{x}M$, seen as a symplectic vector space for
each $x\in M$. This section $\rho_{M}$ produces a new section, which
we will continue to denote by $\rho_{M}$, on the pullback bundle\begin{eqnarray*}
(\phi\circ p_{N})^{*}(\T M) & \longrightarrow & B_{\phi}^{\Psi}.\end{eqnarray*}
Our strategy now is to construct a vector bundle isomorphism $A$
between $(\phi\circ p_{N})^{*}(\T M)$ and $\ker(p_{N})_{*}$ and
to use it in order to carry $\rho_{M}$ to a section of \eqref{eq:can-dens-bundle},
which will be our canonical half-density $\rho_{B}$. In order to
produce $A$, we can first observe that the tangent space of $B_{\phi}^{\psi}$
at $z=(p_{1},x_{1},x_{2})$ can be decomposed into the direct sum\begin{eqnarray*}
T_{z}B_{\phi}^{\Psi} & = & \Cot_{\phi(x_{2})}M\oplus T_{x_{1}}M\oplus T_{x_{2}}N.\end{eqnarray*}
Thus, the vertical bundle fiber at $z$ is\begin{eqnarray*}
\ker_{z}(p_{N*}) & = & \Cot_{\phi(x_{2})} M\oplus T_{x_{1}}M\end{eqnarray*}
since it is the tangent space to the fiber $p_{N}^{-1}(x_{2})$ at
$z$. At this point, we can write down the desired vector bundle isomorphism
$A$ by using the derivative of a micro exponential. Fiberwise, $A(z)$
is the linear map from $(\phi\circ p_{N})^{*}(\T M)_z=T^*_{\phi(x_2)}M\oplus T_{\phi(x_2)}M$ to $\ker_{z}(p_{N*})=\Cot_{\phi(x_{2})} M\oplus T_{x_{1}}M$ 
given in matrix form by 
\begin{eqnarray*}
A(z) & = & \left(\begin{array}{cc}
\id_{\Cot_{\phi(x_2)} M} & 0\\
0 & \partial_{v_{1}}\Psi_{\phi(x_{2})}\big(\Psi_{\phi(x_{2})}^{-1}(x_{1})\big)\end{array}\right).
\end{eqnarray*}
Finally, we can set\begin{eqnarray*}
\rho_{B}(z)(\mathbf{e}) & :=\frac{1}{(2\pi\hbar)^{\frac{m+n}{2}}} & \rho_{M}(z)(\mathbf{e}\cdot A(z)^{-1}),\end{eqnarray*}
 where $m=\dim M$ and $n=\dim N$. 
\begin{example}
\label{exa:rhoB}Let us compute $\rho_{B}$ in the case where $M=\R^{m}$, $N=\R^{n}$ and $\Psi_{x}:\R^{m}\rightarrow\R^{m}$ is a global
diffeomorphism of for all $x\in\R^{n}$. In this case, we have that\begin{eqnarray*}
B_{\phi}^{\Psi} & = & \R_{p_{1}}^{m}\times\R_{x_{1}}^{m}\times\R_{x_{2}}^{n},\end{eqnarray*}
and we can identify the fibers of both $(\phi\circ p_{N})^{*}(\T M)$
and $\ker(p_{N})_{*}$ with the vector space $V:=(\R^{m})^{*}\oplus\R^{m}$.
The canonical section $\rho_{M}$ is the constant symplectic half
density\begin{eqnarray*}
\rho_{M}(x_{2}) & = & \sqrt{dp_{1}}\sqrt{dx_{1}}\end{eqnarray*}
on the symplectic vector space $V$. Hence, we obtain that\begin{eqnarray*}
\rho_{B}\big((p_{1},x_{1},x_{2})\big) & = & \Big|\det\partial_{v_{1}}\Psi_{\phi(x_{2})}\big(\Psi_{\phi(x_{2})}^{-1}(x_{1})\big)\Big|^{-\frac{1}{2}}\frac{\sqrt{dp_{1}}\sqrt{dx_{1}}}{(2\pi\hbar)^{m}},\end{eqnarray*}
since $\rho_{M}(x_{2})(\mathbf{e}\cdot A(z)^{-1})=|\det A(z)^{-1}|^{\frac{1}{2}}\rho_{M}(x_{2})$.

In particular, in the case of the global exponential coming from the
canonical affine connection on $\R^{m}$, $\Psi_{x_{1}}(v_{1})=x_{1}+v_{1}$,
we have that \begin{eqnarray*}
\big|\det\partial_{v_{1}}\Psi_{\phi(x_{2})}(v_{1})\big|^{\frac{1}{2}} & = & 1\end{eqnarray*}
in the above formula for $\rho_{B}(z)$. 
\end{example}

\subsection{The Fourier integral operator $Q_{h}(a,\Cot\phi)$}

In this paragraph, we put everything together to construct the
semi-classical Fourier integral operator\begin{eqnarray*}
Q_{\hbar}(a,\Cot\phi):\xH(M) & \longrightarrow & \xH(N)\end{eqnarray*}
associated to a half-density $a\in|\Omega_{\hbar}|^{\frac{1}{2}}(\phi^{*}(\Cot M))$.
It goes as follows: we start with $u\in\xH(M)$ . Now, the restriction
of the half-density product\[
u\otimes a\in|\Omega_{\hbar}|^{\frac{1}{2}}\big(M\times\phi^{*}(\Cot M)\big)\]
to the submanifold $B_{\phi}^{\Psi}\subset(M\times\phi^{*}(\Cot M)\big)$
produces a half-density on $B_{\phi}^{\psi}$, which we will still
write as $u\otimes a$. We are now in the case of Section \ref{sub:Averaging-half-densities},
with a half-density $u\otimes a\otimes\, e^{\frac{i}{\hbar}S_{\phi}}$
on $B_{\phi}^{\Psi}$ and the canonical half-density section $\rho_{B}$
on the fibers of the projection $p_{N}:B_{\phi}^{\Psi}\rightarrow N$.
This allows us to define our operator pointwise as the semi-classical
oscillatory integral \eqref{eq:QuantFormula} since, as already shown,
$S_{\phi}$ has a unique critical point in the fiber $p_{N}^{-1}(x_{2})$,
namely at $s_{N}(x_{2})=(0,\phi(x_{2}),x_{2})$.
\begin{example}
\label{exa:Operator}We continue Example \ref{exa:rhoB}, in which
 $M=\R^{m}$ , $N=\R^{n}$, the micro exponential is a global
diffeomorphism  (as for instance the one coming from the canonical affine connection, i.e., $\Psi_{x_{1}}(v_{1})=x_{1}+v_{1}$). In coordinates, we have \begin{eqnarray*}
S_{\phi}(p_{1},x_{1},x_{2}) & := & \big\langle p_{1},\Psi_{\phi(x_{2})}^{-1}(x_{1})\big\rangle,\\
a & := & a(p_{1},x_{2})\sqrt{dp_{1}}\sqrt{dx_{2},}\\
u & := & u(x_{1})\sqrt{dx_{1}}.\end{eqnarray*}
Using the formula for $\rho_{B}$ computed in Example \ref{exa:rhoB},
we obtain that
\begin{eqnarray*}
au\rho_{B} & = & \big(a(p_{1},x_{2})u(x_{1})\Big|\det\partial_{v_{1}}\Psi_{\phi(x_{2})}\big(\Psi_{\phi(x_{2})}^{-1}(x_{1})\big)\Big|^{-\frac{1}{2}}\sqrt{dx_{2}}\big)\,\frac{dp_{1}dx_{1}}{(2\pi\hbar)^{ym}},\end{eqnarray*}
is a density on $p_{N}^{-1}(x_{2})$ with values in $|\Cot_{x_{2}}N|^{\frac{1}{2}}$.
Hence, $\big(Q_{\hbar}(a,T^{*}\phi)u\big)(x_{2})$ coincides with
\begin{equation*}
\left(\SCint a(p_{1},x_{2})u(x_{1})\Big|
\det\partial_{v_{1}}\Psi_{\phi(x_{2})}\Big|^{-\frac{1}{2}}e^{\frac{i}{\hbar}\langle p_{1},\Psi_{\phi(x_{2})}^{-1}(x_{1})\rangle}\frac{dp_{1}dx_{1}}{(2\pi\hbar)^{m}}\right)\sqrt{dx_{2}}.
\end{equation*}
In the case, we take the canonical affine connection
$\Psi_{x_{1}}(v_{1})=x_{1}+v_{1}$,
we obtain the semi-classical version of pseudo-differential operators.
\end{example}

\section{Quantization of symplectic micromorphisms\label{sec:Quantization-of-symplectic}}

In this section, we extend the semiclassical FIO calculus on cotangent
lifts developed in the previous section to a more general class of
wave fronts. We do so by deforming the canonical generating families
of cotangent lifts. It turns out that these new wave fronts belong
to a special class of canonical relation germs: the class of symplectic
micromorphisms which are the morphisms of the cotangent microbundle
category $\Mic$ as constructed in \cite{SM_Microfolds}. As is the case for
cotangent lifts, symplectic micromorphisms always compose well,
and we obtain a new category of semiclassical Fourier integral operators
together with a wave front functor\[
\WF:\SFour^{\Mic}\rightarrow\Mic.\]
This new category encompasses well known examples of semiclassical FIO's,
the main example of which is the class of pseudo-differential operators. 

Since our deformations of cotangent lift generating families have
a meaning for general conormal bundles, we start with this case.

\subsection{Conormal microbundle deformations\label{sub:Conormal-microbundle-deformations}}

Our goal here is to describe a class of lagrangian deformations of
conormal bundles along a tubular neighborhood and show how to obtain
their generating families as deformations of the standard
generating family of Example \ref{sub:ConormBundle}. 

A tubular neighborhood $(V,\Psi)$ of $C$ in $X$ fibers the neighborhood
$V$ of $C$ into slices $V_{c}:=\Psi(U_{c})$ over the points $c\in C$.
The conormal bundle $N^{*}V_{c}$ of each of these slices is a lagrangian
submanifold of $\Cot X$, which is transversal to the conormal bundle
of $C$. Moreover, $N^{*}C$ and $N^{*}V_{c}$ intersect only at $c$.
The distribution $\Lambda$ over $C$ given by the collection of tangent
spaces to $N^{*}V_{c}$ for each $c\in C$ is thus a lagrangian distribution
over $C$, which is transversal to $N^{*}C$, hence we have the
lagrangian splitting\[
T(\Cot X)=T(N^{*}C)\oplus\Lambda\]
of the tangent space to $\Cot X$ along $C$.
\begin{defn}
\label{def:conormal-deformation}A \textbf{deformation} of the lagrangian
microfold $N^{*}C$ along a tubular neighborhood $(V,\Psi)$ of $C$
in $M$ is a lagrangian submicrofold $[L,C]$ of $\Cot X$ that is
transversal to the lagrangian distribution $\Lambda$ defined above.
\end{defn}
Our goal is to show that all deformations of $N^{*}C$ along a given
tubular neighborhood are obtained from the standard conormal bundle
generating family by deforming its generating function of the conormal bundle as follows:
\begin{defn} Let $[S_C]$ be the generating function of the conormal bundle to $C$. 
We call a deformation of $[S_{C}]$ a function germ of the form
$
	[S_{C}^{f}] =  [S_{C}]-[r_{C}^{*}f].
$
where \[
[f]\in C_{0}^{\infty}(N^{*}C,C)\quad\textrm{s.t.}\quad\partial_{p}f(0,x)=0.
\]
\end{defn}

\begin{prop}
\label{pro:deformation}The triple $\big([B_{C}^{\Psi},Z_{C}],[p_{C}],[S_{C}^{f}]\big)$
generates a deformation $[L_{f}^{\Psi},C]$ of $N^{*}C$ along the
tubular neighborhood $(V,\Psi)$. Conversely, all such deformations
arise this way. Moreover, the critical submicrofold $[\Sigma_{C}^{f},Z_{C}]$
is given, in representatives, by all the points $z\in B_{C}^{\Psi}$
of the form\[
z=\big(p,\Psi_{c}(\partial_{p}f(p,c),c\big),\]
where $(p,c)$ is taken in an appropriate neighborhood of the zero
section of $N^{*}C$. The corresponding lagrangian embedding germ
$[\lambda_{C}^{f}]:[\Sigma_{C}^{f},Z_{C}]\rightarrow[\Cot X,C]$ is
given explicitly by\[
\lambda_{C}^{f}(z)=\Big(\Cot_{v_{f}}\Psi_{c}(p),\Psi_{c}(c)\Big),\]
where $v_{f}=\partial_{p}f(p,c)$. \end{prop}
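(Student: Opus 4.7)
The plan is to work throughout in the coordinates $(p, v, c)$ on $B_C^\Psi$ coming from the tubular parametrization $(p, v, c) \mapsto (p, \Psi_c(v), c)$, where $p \in N_c^*C$ and $v \in U_c \subset N_cC$. In these coordinates, $S_C^f(p, v, c) = \langle p, v \rangle - f(p, c)$, and the fibration $p_C$ has fibers parametrized by the covector $p$ alone. The vertical differential of $S_C^f$ is $v - \partial_p f(p, c)$, whose derivative with respect to $v$ is the identity; hence the Morse family transversality condition holds trivially. Consequently $(B_C^\Psi, p_C, S_C^f)$ is a genuine generating family, and it defines a lagrangian submicrofold $[L_f^\Psi, C]$. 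Its critical submanifold is the zero locus $\{v = \partial_p f(p, c)\}$, which translates via the tubular parametrization into the explicit form in the statement, and the hypothesis $\partial_p f(0, c) = 0$ guarantees it contains $Z_C$.

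For the lagrangian embedding formula, I compute $dS_C^f$ and restrict to $\Sigma_C^f$. Since the $dp$-component $v - \partial_p f$ vanishes there, one is left with
\[
dS_C^f\big|_{\Sigma_C^f} = \langle p, dv\rangle - \langle \partial_c f(p, c), dc \rangle.
\]
Using $p_C(p,v,c) = \Psi_c(v)$, so that along the critical locus $dy = T_{v_f}\Psi_c \cdot dv + (\partial_c \Psi)(v_f) \cdot dc$ at the image point, this 1-form descends via $p_C$ to a unique covector at $\Psi_c(v_f) \in X$, which up to the natural identification is the cotangent lift of $\Psi_c$ applied to $p$ at $v_f$, as claimed. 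The transversality of $L_f^\Psi$ to $\Lambda$ along $C$ is then a tangent-space check: at $p=0$, the $\partial_p$-direction in the parametrization $(p,c) \mapsto \lambda_C^f(p,c)$ has nontrivial component in the conormal direction $\partial_\pi$, whereas $\Lambda$ is spanned by $\partial_v, \partial_\xi$ only.

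The main obstacle, and the subtlest part of the argument, is the converse. Given any deformation $[L, C]$, the transversality to $\Lambda$ allows one to express $L$, in the local chart $T^*X \cong T^*(NC)$ with coordinates $(c, v, \xi, \pi)$, as the graph of a germ
\[
(p, c) \longmapsto \bigl(c,\, v(p, c),\, \xi(p, c),\, p\bigr), \qquad v(0, c) = 0 = \xi(0, c).
\]
The lagrangian condition translates into the requirement that the pulled-back Liouville 1-form $\xi\, dc + p\, dv$ on $L$ be exact, equal to $dS$ for some germ $S$ on $N^*C$ vanishing on $C$; this gives the identities $\partial_p S = \langle p, \partial_p v\rangle$ and $\partial_c S = \xi + \langle p, \partial_c v\rangle$. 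The key construction is then the Legendre-style transform
\[
f(p, c) := \langle p, v(p, c) \rangle - S(p, c),
\]
whose partial derivatives, after differentiation and cancellation, yield $\partial_p f = v$ and $\partial_c f = -\xi$. Since also $\partial_p f(0, c) = v(0, c) = 0$, this $f$ satisfies the required vanishing condition and reproduces $L$ as $L_f^\Psi$, establishing the bijection. The delicate points are the smooth graph parametrization (requires the strict transversality to $\Lambda$ at every point near $C$) and verifying that $f \mapsto L_f^\Psi$ and $L \mapsto f$ are inverse operations at the level of germs, so that the correspondence is genuinely one-to-one.
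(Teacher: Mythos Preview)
Your argument is correct and closely parallels the paper's, but with a noteworthy difference in the converse direction.

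For the forward direction, both you and the paper pass to the coordinates $(p,v,c)$ via the tubular parametrization, arrive at $S_C^f = \langle p, v\rangle - f(p,c)$, read off the critical locus $v = \partial_p f(p,c)$, and compute the lagrangian embedding. Your transversality check for the Morse family condition (derivative of the fiber differential with respect to $v$ is the identity) and your tangent-space verification that $L_f^\Psi$ is transversal to $\Lambda$ are the same in substance as the paper's.

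For the converse, the paper takes a more conceptual route: it invokes a symplectic normal form (Theorem~7.1 of \cite{Weinstein1971}) to produce a symplectomorphism germ $[\chi]:[\Cot N^*C, C]\to[\Cot X, C]$ sending the vertical distribution to $\Lambda$, and then observes that lagrangian submicrofolds through $C$ transversal to the vertical distribution in $\Cot N^*C$ are exactly the graphs $\graph df$. Pushing these forward by $\chi$ gives all deformations. Your approach is instead entirely explicit: you write $L$ as a graph $(p,c)\mapsto(c,v(p,c),\xi(p,c),p)$ in Darboux coordinates on $\Cot(NC)$, use the lagrangian condition to get a primitive $S$ of the pulled-back Liouville form, and recover $f$ by the Legendre-type transform $f = \langle p, v\rangle - S$. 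The computation $\partial_p f = v$, $\partial_c f = -\xi$ then shows $L = L_f^\Psi$.

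What each buys: the paper's argument is shorter and makes the geometric reason for the bijection transparent (it is just the standard correspondence between generating functions and lagrangians transversal to the fibers, transported by $\chi$), at the cost of citing an external normal-form theorem. Your argument is self-contained and gives an explicit inverse to $f\mapsto L_f^\Psi$, which is useful if one actually wants to compute $f$ from $L$; it also makes visible that the construction is a Legendre transform. One small point worth tightening in your write-up: the step ``the pulled-back Liouville form is exact'' uses more than the lagrangian condition (which only gives closedness); you should note that exactness follows because the form vanishes along $\{p=0\}$ and the parametrizing neighborhood retracts onto it.
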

\begin{rem}
A consequence of Proposition \ref{pro:deformation} is that the critical
submicrofolds for all deformations arise as follows: for each deformation
$[f]$, there is a diffeomorphism germ\begin{eqnarray*}
[\chi_{f}]:[\Sigma_{C},Z_{C}] & \longrightarrow & [\Sigma_{C}^{f},Z_{C}],\end{eqnarray*}
that fixes the core. It is given in representatives by\begin{eqnarray*}
\chi_{f}(p,c,c) & = & \Big(p,c,\Psi_{c}(\partial_{p}f(p,c))\Big).\end{eqnarray*}
Observe also that we obtain a family of lagrangian embeddings $[\iota_{C}^{f}]:N^{*}C\rightarrow[\Cot X,C]$
deforming the canonical inclusion by composition: $[i_{C}^{f}]=[\lambda_{C}^{f}\circ\chi^{f}\circ\tau_{C}]$.
Explicitly, we have\[
\iota_{C}^{f}(p,c)=\Big(\Cot_{v_{f}}\Psi_{c}(p),\Psi_{c}(c)\Big).\]
\end{rem}
\begin{proof}
Let us first prove that the triple $\big([B_{C}^{\Psi},Z_{C}],[p_{C}],[S_{C}^{f}]\big)$
is a generating family. This amounts to showing that there is a representative
$S_{C}^{f}\in[S_{C}^{f}]$ that is non-degenerate. In is enough to
work locally. In local coordinates, we have that\begin{eqnarray*}
S_{C}^{f}(p,x,c) & = & \langle p,\Psi_{c}^{-1}(x)\rangle-f(p,c),\end{eqnarray*}
and that the critical set $\Sigma_{C}^{f}$ is the locus of points
such that $H(p,x,c)=0$ with\begin{eqnarray*}
H(p,x,c) & := & \Psi_{c}^{-1}(x)-\partial_{p}f(p,c).\end{eqnarray*}
Observe now that, for all $(0,c,c)\in Z_{C}$, we have $H(0,c,c)=0$
and \begin{eqnarray*}
\frac{\partial H}{\partial x}(0,c,c) & = & \id.\end{eqnarray*}
Hence, an application of the implicit function theorem tells us that,
for each $(p,c)$ with $p$ small enough, there is an unique $x(p,c)$
such that $H(p,x(p,c),c)=0$, and thus the solutions of this equation
form a submanifold $\Sigma_{C}^{f}$ containing $Z_{C}$. Taking the
germ $[\Sigma_{C}^{f},Z_{C}]$ yields thus a submicrofold: the critical
submicrofold of $[S_{C}^{f}]$. Actually, the equation $H(p,x,c)=0$
is explicitly solvable since $f$ is independent of $x:$ for each
$(p,c)$ sufficiently close to the zero section, we can take
\[x(p,c)=\Psi_{c}(\partial_{p}f(p,c)),\]
which gives the form of the critical submicrofold in representatives.
It follows that $\big([B_{C}^{\Psi},Z_{C}],[p_{C}],[S_{C}^{f}]\big)$ is a
generating family generating a lagrangian submicrofold, which we denote
by $[L_{f}^{\Psi},C]$. A straightforward computation now gives the
form of the lagrangian embedding germ $\lambda_{c}^{f}$.

It remains to see that $[L_{f}^{\Psi},C]$ is a deformation of $N^{*}C$
and that all deformations arise this way. For this, let us identify
the cotangent bundle $\Cot N^{*}C$ with $N^{*}C\oplus NC$ and introduce the diffeomorphism germ 
\[g:[\Cot N^{*}C,C]\longrightarrow[B_{C}^{\Psi},Z_{C}],\]
that maps $(p,v,c)$ to $(p,\Psi(v),c)$. It produces an equivalent
generating family\[
\big([\Cot N^{*}C,C],\, p_{C}\circ g\,,S_{C}\circ g\big)\]
whose critical submicrofold is the conormal microbundle $N^{*}C$.
The lagrangian embedding germ of this new generating family can be
conveniently described as the restriction to the critical submicrofold
of a symplectomorphism germ\[
[\chi]:[\Cot N^{*}C,C]\longrightarrow[\Cot X,C],\]
that sends the vertical distribution $V(\Cot N^{*}C)$ over $C$ to
the lagrangian distribution $\Lambda$.  (The existence of such a germ $[\chi]$
is guaranteed by Theorem 7.1 in \cite{Weinstein1971}.)
Now, in this new
description, the generating function deformations read\[
(S_{C}^{f}\circ g)(p,v,c)=\langle p,v\rangle-f(p,c).\]
The corresponding critical submicrofolds are nothing but the lagrangian
submicrofolds $[\graph df,C]$ and the lagrangian embedding germs
are given by the restriction of $[\chi]$ to them. Since lagrangian
submicrofolds of the form $[\graph df,C]$ are exactly the lagrangian
submicrofolds through $C$ that are transversal to the vertical distribution
in $\Cot N^{*}C$, then the generated lagrangian submicrofolds $[\chi(\graph df),C]$
are exactly all the lagrangian submicrofolds through $C$ that are
transversal to $\Lambda$, that is, all the deformations of $N^{*}C$
along the tubular neighborhood. 
\end{proof}

\subsection{Symplectic micromorphisms as deformed cotangent lifts}

In micro-geometry,  we can define a canonical relation from $\Cot M$ to $\Cot N$
to be a lagrangian submicrofold 
\[
[V,\graph\phi]\subset\overline{\Cot M}\times\Cot N\]
whose core is the graph of a smooth map $\phi:N\rightarrow M$. Although
canonical relations do not compose well in general, it is possible
to single out a class for which they do. This class is the class of
symplectic micromorphisms as introduced in \cite{SM_Microfolds}.
To distinguish them, we will use the special notation\begin{eqnarray*}
([V],\phi):\Cot M & \longrightarrow & \Cot N,\end{eqnarray*}
instead of $[V,\graph\phi]$. They can be characterized in several
ways, the most useful for us now being the following: 
\begin{defn}
A symplectic micromorphism $([V],\phi)$ from $\Cot M$ to $\Cot N$
is a canonical relation $[V,\graph\phi]$ that is transversal to the
lagrangian distribution\begin{eqnarray}
\Lambda: & = & TZ_{M}\oplus V(\Cot N).\label{eq:lagr-distrib}\end{eqnarray}
over $\graph\phi$. 
\end{defn}
Cotangent lifts are symplectic micromorphisms, but not all symplectic
micromorphism arise this way. However, any symplectic micromorphism
can be realized as a cotangent lift deformation in the sense of paragraph
\ref{sub:Conormal-microbundle-deformations}. More precisely, given
a micro exponential $[\Psi]$ on $M$, one can deform the associated
generating family\[
\big([B_{\phi}^{\Psi},Z_{\phi}],\,[p_{\phi}],\,[S_{\phi}]\big)\]
of the cotangent lift $\Cot\phi:\Cot M\rightarrow\Cot N$ by adding
a function germ to it:\begin{equation}
S_{\phi}^{f}(p_{1},x_{1},x_{2})=\langle p_{1},\Psi_{\phi(x_{2})}^{-1}(x_{1})\rangle-f(p_{1},x_{2}),\label{eq:deformed-generating-function}\end{equation}
where $[f]$ is a function germ on the conormal bundle $N^{*}(\graph\phi)$
(identified, as usual, with the pullback bundle $\phi^{*}(\Cot M)$)
that is identically null on the zero section and whose derivative
in the fiber direction vanishes. This yields a lagrangian submicrofold
$[L_{\Psi}^{f},\graph\phi]$, which is a deformation of $\Cot\phi$
along the tubular neighborhood $(V_{\phi},\Psi)$ in the sense of
Definition \ref{def:conormal-deformation}. 

Now, as explained in paragraph \ref{sub:Conormal-microbundle-deformations},
$[L_{\Psi}^{f},\graph\phi]$ is transversal to the lagrangian distribution
$\Lambda^{\Psi}$ given by tangent spaces at points of $\graph\phi$
to the conormal bundles $N^{*}V_{(\phi(x),x)}$ of the tubular neighborhood
slices\[
V_{(\phi(x),x)}:=\Psi_{\phi(x)}(U_{\phi(x)})\times\{x\}.\]
As is easily checked, the lagrangian distribution $\Lambda^{\Psi}$
is independent of the micro exponential and coincides with the lagrangian
distribution \eqref{eq:lagr-distrib} defining symplectic micromorphisms.
Therefore, an application of Proposition \ref{pro:deformation} to
this case immediately yields:
\begin{prop}
\label{pro: generating functions} Once a micro exponential
$[\Psi]$ on $M$ is fixed, there is a one-to-one correspondence between the
symplectic micromorphisms from $\Cot M$ to $\Cot N$ with core map
$\phi$ and the deformations $[L_{\Psi}^{f},\graph\phi]$ of the core
map cotangent lift along the tubular neighborhood $(V_{\phi},\Psi)$. 
\end{prop}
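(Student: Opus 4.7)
The plan is to reduce the statement to Proposition \ref{pro:deformation} applied to $C = \graph\phi \subset M \times N$, once one identifies the lagrangian distribution $\Lambda^\Psi$ attached to the tubular neighborhood $(V_\phi, \Psi)$ with the intrinsic distribution $\Lambda = TZ_M \oplus V(\Cot N)$ of \eqref{eq:lagr-distrib}. Under the Schwartz transform, $\Cot\phi$ is the conormal bundle $N^*(\graph\phi)$, so the framework of paragraph \ref{sub:Conormal-microbundle-deformations} applies verbatim, and the deformed phase \eqref{eq:deformed-generating-function} is precisely the deformation $S_C^f$ of the standard conormal-bundle generating function in the sense of Proposition \ref{pro:deformation}. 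That proposition then furnishes a bijection between function germs $[f]$ (satisfying the required vanishing conditions) and deformations $[L_\Psi^f, \graph\phi]$ of $\Cot\phi$ along $(V_\phi, \Psi)$, each of which is characterized by transversality to $\Lambda^\Psi$ along $\graph\phi$.

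The main step is therefore to verify that $\Lambda^\Psi = \Lambda$, so that deformation-transversality along $(V_\phi, \Psi)$ and symplectic-micromorphism-transversality mean the same thing. This is a pointwise linear-algebraic computation at a point $(\phi(x), x) \in \graph\phi$: the slice $V_{(\phi(x), x)} = \Psi_{\phi(x)}(U_{\phi(x)}) \times \{x\}$ has tangent space $T_{\phi(x)} M \times \{0\}$ at its base point, because $T_0 \Psi_{\phi(x)} = \id$; hence its conormal bundle in $\Cot(M \times N)$ agrees near $(\phi(x),x)$, after Schwartz transport into $\overline{\Cot M} \times \Cot N$, with the product $Z_M|_{\Psi_{\phi(x)}(U_{\phi(x)})} \times \Cot_x N$. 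Differentiating at the base point gives $T_{(0, \phi(x))} Z_M \oplus V_{(0,x)}(\Cot N) = \Lambda$. Since the only input is the normalization $T_0 \Psi_{\phi(x)} = \id$, the answer is the same for every micro exponential, proving in particular that $\Lambda^\Psi$ is intrinsic.

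Granting $\Lambda = \Lambda^\Psi$, the assertion is immediate: by Proposition \ref{pro:deformation}, the submicrofolds $[L_\Psi^f, \graph\phi]$ are exactly the lagrangian submicrofolds through $\graph\phi$ transversal to $\Lambda^\Psi$, while by definition the symplectic micromorphisms of core $\phi$ are exactly those transversal to $\Lambda$. I expect the only delicate point to be the tangent-space computation in the middle paragraph, where one has to track the sign and base-point identifications introduced by the Schwartz transform; once that bookkeeping is handled, the conclusion follows with no further work.
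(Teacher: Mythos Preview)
Your proposal is correct and follows exactly the approach the paper takes in the paragraph immediately preceding the proposition: apply Proposition~\ref{pro:deformation} to $C=\graph\phi$ and reduce the claim to the identification $\Lambda^{\Psi}=\Lambda$, which the paper dismisses with ``As is easily checked'' and you carry out explicitly via the tangent computation for the slice $V_{(\phi(x),x)}$. Your added detail on the Schwartz-transform bookkeeping is accurate and simply fills in what the paper leaves implicit.
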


\subsection{Enhancements and quantization}

Our goal here is to realize the class of semiclassical FIO's on symplectic
micromorphisms in terms of a two-step construction performed on them:
enhancement and quantization. 
\begin{defn}
An \textbf{enhancement} of a symplectic micromorphism $\big([V],\phi\big):\Cot M\rightarrow\Cot N$
is a half-density germ\[
[a]\in|\Omega|^{\frac{1}{2}}\big([V,\graph\phi]\big).\]
The triple $([a],[V],\phi):\Cot M\rightarrow\Cot N$ will be called
an \textbf{enhanced symplectic micromorphism}.
\end{defn}
Let us fix a micro exponential $[\Psi]$ on $M$ and let $([B_{\phi}^{\Psi},Z_{\phi}],p_{\phi},S_{\phi}^{f})$
be the corresponding generating family of $([V],\phi):\Cot M\rightarrow\Cot N$
as in Proposition \ref{pro: generating functions}. An enhancement
$[a]$ of the symplectic micromorphism $([V],\phi)$ yields a half
density germ\begin{eqnarray*}
[a_{\Psi}] & := & [(\iota_{\phi}^{f})^{*}a]\in|\Omega|^{\frac{1}{2}}\big(\phi^{*}(\Cot M)\big),\end{eqnarray*}
where $[\iota_{\phi}^{f}]:\phi^{*}(\Cot M)\rightarrow\Cot(M\times N)$
is the lagrangian embedding germ given by the generating family. Now,
we can associate to the triple $T=([a],[V],\phi)$ a semiclassical
Fourier integral operator\begin{eqnarray*}
Q_{\hbar}(T):\xH(M) & \longrightarrow & \xH(N)\end{eqnarray*}
exactly as we did for cotangent lifts, except that we replace
$S_{\phi}$ by its deformed version $S_{\phi}^{f}$ and $a$ by $a_{\Psi}$
in the semiclassical integral \eqref{eq:QuantFormula}. The crucial
point is that both $S_{\phi}$ and $S_{\phi}^{f}$ have a the same
unique critical point in the integration fiber $p_{N}^{-1}(x_{2})$,
namely $(0,\phi(x_{2}),x_{2})$. Moreover, the deformed family \[
\Big([B_{\phi}^{\Psi},Z_{\phi}],[p_{\phi}],[S_{\phi}^{f}]\Big)\]
continues to satisfy the assumptions of paragraph \ref{sub:Integral-representation}. 

\subsection{Local theory\label{sub:Local-theory}}

\subsubsection{Quantization of symplectic micromorphisms}

We are interested here in the enhancement and the quantization of
symplectic micromorphisms\begin{eqnarray*}
([L],\phi):\Cot U & \longrightarrow & \Cot V\end{eqnarray*}
between cotangent bundles of some open subsets $U\subset\R^{k}$ and
$V\subset\R^{l}$, whose canonical global coordinates we denote by
$(p_{1},x_{1})$ and $(p_{2},x_{2})$ respectively. As already noticed
in Examples \ref{exa:rhoB} and \ref{exa:Operator}, this case has
the special special feature of having a global exponential\begin{eqnarray*}
\Psi_{x}(v): & = & x+v\end{eqnarray*}
for each open subset $U\subset\R^{n}$. Hence, we can canonically
associate a generating family to any symplectic micromorphism $([L],V)$
from $\Cot U$ to $\Cot V$. Namely, Proposition \ref{pro: generating functions}
shows that there is a unique function germ $[f]\in C_{0}^{\infty}\Big(\phi^{*}(\Cot U),Z\Big)$
with $\partial_{p}f(0,x)=0$ such that\[
\Big([B(U,V),Z_{\phi}],[p_{\phi}],[S_{\phi}^{f}]\Big)\]
is a generating family for $([L],\phi)$, where \begin{eqnarray*}
B(U,V) & = & (\R^{k})^{*}\times U\times V,\\
Z_{\phi} & = & \{0\}\times\graph\phi,\\
S_{\phi}^{f}(p_{1},x_{1},x_{2}) & = & \big\langle p_{1},\phi(x_{2})-x_{1}\big\rangle+f(p_{1},x_{2}).\end{eqnarray*}

\begin{rem}
The generating function $S_{\phi}^{f}$ above differs by a minus sign
from our previous definition in \eqref{eq:deformed-generating-function}.
This sign is irrelevant, and we choose to write the generating function
this way here in order to better agree with the usual sign convention
for the phase of pseudo-differential operators. \end{rem}
\begin{notation}
At times, it will prove useful to collect the terms in $S_{\phi}^{f}$
that do not depend on $x_{1}$ into the single term\begin{eqnarray}
F(p_{1},x_{2}): & = & \langle p_{1},\phi(x_{2})\rangle+f(p_{1},x_{2}).\label{eq:Ff}\end{eqnarray}
The use of the upper case letter $F$ in the generating function notation
$S_{\phi}^{F}$ instead of the lower case $f$ will mean that we consider
the generating function\begin{eqnarray*}
S_{\phi}^{F}(p_{1},x_{1},x_{2}) & = & -\langle p_{1},x_{1}\rangle+F(p_{1},x_{2}),\end{eqnarray*}
in which the upper case $F$ is related to the lower case $f$ via
the relation \eqref{eq:Ff}. We write $S_{\phi}$ in case $F(p_{1},x_{2})=\langle p_{1},\phi(x_{2})\rangle$,
that is for the generating function of the cotangent lifts. Since,
the generating family is canonical in the local setting, we will also
use the notation $[S_{\phi}^{F}]$ to denote the corresponding symplectic
micromorphism $([L^{F}],\phi)$.
\end{notation}
The critical microfold of $S_{\phi}^{F}$ is\begin{eqnarray*}
\Sigma_{\phi}^{F} & = & \bigg\{\Big(p_{1},\partial_{p}F(p_{1},x_{2}),x_{2}\Big):\;(p_{1},x_{2})\in W\bigg\},\end{eqnarray*}
where $W$ is a suitable neighborhood of the zero of $\phi^{*}(\Cot U)$.
This yields the explicit formula \begin{eqnarray*}
i_{\phi}^{F}(p_{1},x_{2}) & = & \bigg(\Big(-\partial_{x_{1}}S_{\phi}^{F}(z),x_{1}\Big),\Big(\partial_{x_{2}}S_{\phi}^{F}(z),x_{2}\Big)\bigg),\\
 & = & \bigg(\Big(p_{1},\partial_{p}F(p_{1},x_{2})\Big),\Big(\partial_{x}F(p_{1},x_{2}),x_{2}\Big)\bigg),\end{eqnarray*}
where $z$ is the image of $(p_{1},x_{2})$ in $\Sigma_{\phi}^{F}$,
for the embedding of $[\phi^{*}(\Cot U),Z_{\phi}]$ into $\overline{\Cot U}\times\Cot V$. 

In the local case, an enhancement of $[S_{\phi}^{F}]$ can be identified
with a germ $[a]$ of semiclassical square integrable function \[
a\in L_{\hbar}^{2}((\R^{m})^{*}\times V)\]
around the $\{0\}\times V$, and the semiclassical intrinsic Hilbert
space $\xH(U)$ with $L_{\hbar}^{2}(U)$ in the notation of paragraph
\ref{sub:The-semiclassical-intrinsic}. The quantization of the enhanced
symplectic micromorphism $([a],S_{\phi}^{F})$ is the semiclassical
Fourier integral operator from $L_{\hbar}^{2}(U)$ to $L_{\hbar}^{2}(V)$
given by the formula

\begin{equation}
\Big(Q_{\hbar}\big([a],S_{\phi}^{f}\big)\Psi\Big)(x_{2})  :=  \SCint_{\R^{m}\times U}a(p_{1},x_{2})\Psi(x_{1})e^{\frac{i}{\hbar} S_\phi^f(p_1 , x_1 x_2)}\frac{dp_{1}dx_{1}}{(2\pi\hbar)^{m}},\label{eq:local-integral-representation}
\end{equation}
where $S_\phi^f(p_1, x_1, x_2) = \langle p_{1},\phi(x_{2})-x_{1}\rangle+f(p_{1},x_{2})$.
Note that these operators comprise the class
of semiclassical pseudo-differential operators. Namely, when both
$U$ and $V$ are $\R^{n}$ , the core map $\phi$ is the identity
and  $f=0$, we obtain the well-know integral representation
for semiclassical pseudo-differential operators: \begin{eqnarray*}
\big(Q_{\hbar}([a],S_{\id})\Psi\big)(x) & =\frac{1}{(2\pi\hbar)^{n}} & \SCint_{\R^{n}}a(p,y)f(y)e^{\frac{i}{\hbar}\langle p,x-y\rangle}dpdy,\end{eqnarray*}
where $a$ is the total symbol of the operator. Hence, this defines
a map\begin{eqnarray*}
\xOp_{\hbar}:[a] & \longmapsto & Q_{\hbar}([a],S_{\id}^{0})\end{eqnarray*}
from function germs on $\Cot\R^{n}$ around the the zero section to
operators acting on $L_{\hbar}^{2}(\R^{n})$. This is known as the
standard quantization of the symbol $a$, which satisfies the correspondence
principle of quantum mechanics:\begin{eqnarray*}
\xOp_{\hbar}([x])\Psi(x) & = & \frac{\hbar}{i}\frac{\partial\Psi}{\partial x}(x),\\
\xOp_{\hbar}([p])\Psi(x) & = & x\Psi(x),\end{eqnarray*}
where $[p]$ denotes the germ of the projection $(p,x)\mapsto p$ and
$[x]$ the germ of the projection $(p,x)\mapsto x$ around the zero
section. 

Here are other examples which are not pseudo-differential
operators.

The first example describes the class of semiclassical FIO's whose
wave fronts are the symplectic micromorphisms from $\Cot\R^{n}$ to
the point. 
\begin{example}
\label{Ex: counit}Consider a micromorphism $([L],\phi)$ from $\Cot\R^{n}$
to the cotangent bundle of the point, which we denote by $\star$. The core map $\phi:\{\star\}\rightarrow\R^{n}$ is completely determined
by the choice of a point $x_{0}\in\R^{n}$, and $[L]$ is a lagrangian
submanifold germ through $x_{0}$ transversal to the zero section.
Since $\phi^{*}(\Cot\R^{n})$ is the fiber $\Cot_{x_{0}}\R^{n}$,
the generating function is a function germ $[S_{x_{0}}^{f}]:[\Cot_{x_{0}}\R^{n},0]\rightarrow[\R,0]$
of the form \begin{eqnarray*}
S_{x_{0}}^{f}(p) & = & -\langle p,x_{0}\rangle+f(p).\end{eqnarray*}
A enhancement $[a]$ in this case is also a function germ on $\Cot_{x_{0}}U$
at the origin, and the corresponding quantization \begin{eqnarray*}
Q_{\hbar}([a],S_{x_{0}}^{f}):L_{\hbar}^{2}(\R^{n}) & \longrightarrow & \C[[\hbar]]\end{eqnarray*}
is a distribution on $\R^{n}$. When $f=0$, we obtain that\begin{eqnarray*}
Q_{\hbar}([a],S_{x_{0}})\Psi & = & \Big(\mathcal{F}_{\hbar}^{-1}\big(a\mathcal{F}_{h}(\Psi)\big)\Big)(x_{0}),\end{eqnarray*}
where $\mathcal{F}_{\hbar}$ is the asymptotic Fourier transform on
$\R^{n}$. In particular, this yields the derivatives of the delta
function concentrated at $x_{0}$: \begin{eqnarray*}
Q_{\hbar}(p^{\alpha},S_{x_{0}}) & = & (-i\hbar)^{|\alpha|}\partial^{\alpha}\delta(x-x_{0}),\end{eqnarray*}
where $\alpha\in\N^{n}$ is a multi-index. 
\end{example}

The second example describes the class of semiclassical FIO's whose
wave fronts are the unique symplectic micromorphism from the point
to $\Cot\R^{n}$. 
\begin{example}
\label{Ex: unit}There is only one symplectic micromorphism $e_{U}$
from  $\Cot \neutral$  (where $\neutral = \{\star\}$ is the point) to $\Cot U$. Explicitly, $e_{U}$ is given by $\big([\{0\}\times U],\pr\big)$,
where $\pr$ is the projection of $U$
to the unique point of
$E$. The generating function of $e_{U}$ is the zero function $S_{\pr}^{0}(x)=0$.
An enhancement of $e_{U}$ is, therefore, a function $f\in L_{\hbar}^{2}(U),$
and its quantization\begin{eqnarray*}
Q_{\hbar}(f,e_{U}):\C[[\hbar]] & \longrightarrow & L_{\hbar}^{2}(U)\end{eqnarray*}
can be identified with itself: $Q_{\hbar}(f,e_{U})\Psi=f\Psi$. 
\end{example}

\subsubsection{Generating function composition formula}

Consider  $U$, $V$, and $W$  open subsets of euclidean spaces, and let $(B(U,V),S_{\phi}^{F})$ and $(B(V,W),S_{\psi}^{G})$ be the generating
families generating the symplectic micromorphisms\[
\Cot U\overset{([L_{1}],\phi)}{\longrightarrow}\Cot V\overset{([L_{2}],\psi)}{\longrightarrow}\Cot W,\]
and let $\big(B(U,W),S_{\phi\circ\psi}^{G\circ F}\big)$ be the local
generating family of the composition $\big([L_{2}\circ L_{1}],\phi\circ\psi\big)$.
Obviously, symplectic micromorphism composition induces a composition operation on
local generating families\begin{eqnarray*}
S_{\psi}^{G}\circ S_{\phi}^{F} & := & S_{\phi\circ\psi}^{G\circ F}.\end{eqnarray*}
This composition is associative in the sense that it produces a category
in which objects are open subsets $U$ of $\R^{n}$ for some $n$ and
a morphism from $U$ to $V$ is a local generating family $\big(B(U,V),S_{\phi}^{F}\big)$.
The identity morphism on $U$ is the local generating family $\big(B(U,U),S_{\id}^{I}\big),$
where the generating function is the usual phase of pseudo-differential
operators\begin{eqnarray*}
S_{\id}^{I}(p_{1},x_{1},x_{2}) & = & I(p_{1},x_{2})-p_{1}x_{1},\end{eqnarray*}
where $I(p_{1},x_{2})=p_{1}x_{2}$. This category inherits a symmetric monoidal
 structure from the microsymplectic category. The unit object
is the point $\R^{0}=\{\star\}$, the tensor product on objects is
the usual cartesian product $U\times V$, and the tensor product of
local generating families\[
\Big(B(U_{1},V_{1}),S_{\phi_{1}}^{F_{1}}\Big)\otimes\Big(B(U_{2},V_{2}),S_{\phi_{2}}^{F_{2}}\Big)\]
is the local generating family \[
\Big(B\big(U_{1}\times U_{2},V_{1}\times V_{2}\big),S_{\phi_{1}}^{F_{1}}\otimes S_{\phi_{2}}^{F_{2}}\Big),\]
where the tensor product of the generating functions is given by
\begin{eqnarray*}
\big(S_{\phi}^{F}\otimes S_{\psi}^{G}\big)(p_{1},\tilde{p}_{2},x_{1},x_{2},\tilde{x}_{2},\tilde{x}_{3}) & := & S_{\phi}^{F}(p_{1},x_{1},x_{2})+S_{\psi}^{G}(\tilde{p}_{2},\tilde{x}_{2},\tilde{x}_{3}).
\end{eqnarray*}
The unit object $\R^{0}$ is initial: There is only one morphism from
$\R^{0}$ to any other open subset $U$. Namely, $B(\R^{0},U)=U$
and the only generating function in $G(\R^{0},U)$ is the zero function
that we denote by $S_{U}^{0}(x)=0.$ 

We now derive an explicit formula for the generating function $S_{\phi\circ\psi}^{G\circ F}$.
We will use two general facts on composition and reduction of generating
families:

\medskip{}

\textbf{Composition}. Let $p_{i}:B_{i}\rightarrow M_{i}\times M_{i+1}$
be a generating family for the canonical relation $L_{i}\subset\overline{\Cot M_{_{i}}}\times\Cot M_{i+1}$
with generating function $S_{i}$ for $i=1,2$. Suppose that the composition
of $L_{1}$ and $L_{2}$ is transversal. Then, the fibration $B_{1}\times_{M_{2}}B_{2}\rightarrow M_{1}\times M_{3}$
together with the generating function $(S_{1}+S_{2})(b_{1},b_{2})=S_{1}(b_{1})+S_{2}(b_{2})$
is a generating family for the canonical relation $L_{2}\circ L_{1}$. 

\medskip{}

\textbf{Reduction.} Let $p:B\rightarrow M$ be a generating family
with generating function $S$. Suppose that we can factor $p$ as a composition 
$B\overset{\pi}{\longrightarrow}B'\overset{p'}{\longrightarrow}M$ of  two
fibrations 
such that the restriction of $S$ to each fiber $\pi^{-1}(b')$ has the unique
critical point $\gamma(b')$. Then, the two generating families $(B,p,S)$
and $(B',p',S\circ\gamma)$ generate the same lagrangian submanifold
of $\Cot M$.

\medskip{}

With this in mind, we obtain that the fibration
\begin{eqnarray*}
B(U,V)\times_{V}B(V,W) & \overset{p_{V}}{\longrightarrow} & U\times W\end{eqnarray*}
with generating function\begin{eqnarray*}
\big(S_{\phi}^{F}\otimes_{V}S_{\psi}^{G}\big)(p_{1},p_{2},x_{1},x_{2},x_{2},x_{3}) & := & S_{\phi}^{F}(p_{1},x_{1},x_{2})+S_{\psi}^{G}(p_{2},x_{2},x_{3}),\end{eqnarray*}
is a generating family for $[L_{\psi}^{G}\circ L_{\phi}^{F}]$. Now,
we can factor the fibration $p_{V}$ into\[
B(U,V)\times_{V}B(V,W)\overset{\pi}{\longrightarrow}B(U,W)\overset{\pi_{U}\times\pi_{W}}
{\longrightarrow}U\times W.\]
Assuming that the restriction of $S_{\phi}^{F}\otimes_{V}S_{\psi}^{G}$
to each fiber $\pi^{-1}(p_{1},x_{1},x_{3})$ has a unique critical
point $\gamma(p_{1},x_{1},x_{3})$, we obtain a formula for the generating
function composition\begin{eqnarray*}
S_{\psi}^{G}\circ S_{\phi}^{F} & := & (S_{\phi}^{F}\otimes_{V}S_{\psi}^{G})\circ\gamma.\end{eqnarray*}
The following lemma guarantees the existence of a unique critical
point on each fiber. 
\begin{lem}
In the notation as above, we have that the restriction of $S_{\phi}^{F}\otimes_{V}S_{\psi}^{G}$
to the fiber $\pi^{-1}(p_{1},x_{1},x_{3})$ has a unique non-degenerate
critical point\begin{eqnarray*}
(\bar{p}_{2},\bar{x}_{2}) & = & \gamma(p_{1},x_{1},x_{3})\end{eqnarray*}
which is given as the unique solution of system\begin{eqnarray}
\overline{p}_{2} & = & \partial_{x}F(p_{1},\overline{x}_{2}),\label{eq:CritPointP}\\
\bar{x}_{2} & = & \partial_{p}G(\overline{p}_{2},x_{3}).\label{eq:CritPointX}\end{eqnarray}
\end{lem}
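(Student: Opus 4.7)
The plan is to verify the lemma by a direct computation of the critical-point equations on the fiber, followed by an application of the implicit function theorem to establish existence and uniqueness, and finally a Hessian computation to confirm non-degeneracy.

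First, I would compute the critical-point equations explicitly. The sum decomposes as
\[
(S_{\phi}^{F}\otimes_{V}S_{\psi}^{G})(p_{1},p_{2},x_{1},x_{2},x_{3})
  = -\langle p_{1},x_{1}\rangle + F(p_{1},x_{2}) - \langle p_{2},x_{2}\rangle + G(p_{2},x_{3}),
\]
and the fiber $\pi^{-1}(p_{1},x_{1},x_{3})$ is parameterized by the remaining variables $(p_{2},x_{2})$. Differentiating with respect to $p_{2}$ gives $-x_{2}+\partial_{p}G(p_{2},x_{3})=0$, and with respect to $x_{2}$ gives $\partial_{x}F(p_{1},x_{2})-p_{2}=0$. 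These are exactly the equations \eqref{eq:CritPointP}–\eqref{eq:CritPointX}.

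Next, to solve the system I would invoke the implicit function theorem around the base point $(p_{1},p_{2},x_{2})=(0,0,\psi(x_{3}))$. The conditions $f(0,\cdot)=0$, $\partial_{p}f(0,\cdot)=0$ and the analogous conditions on $g$ imply that $\partial_{x}F(0,x_{2})=0$ and $\partial_{p}G(0,x_{3})=\psi(x_{3})$, so $(0,\psi(x_{3}))$ indeed solves the system when $p_{1}=0$. Consider the defining map
\[
H(p_{2},x_{2};p_{1},x_{3}) := \bigl(p_{2}-\partial_{x}F(p_{1},x_{2}),\; x_{2}-\partial_{p}G(p_{2},x_{3})\bigr).
\]
At the base point its derivative with respect to $(p_{2},x_{2})$ reads
\[
\begin{pmatrix} \id & -\partial_{x}^{2}F(0,\psi(x_{3})) \\ -\partial_{p}^{2}G(0,x_{3}) & \id \end{pmatrix}
  = \begin{pmatrix} \id & 0 \\ -\partial_{p}^{2}g(0,x_{3}) & \id \end{pmatrix},
\]
which is lower triangular with identity diagonal and hence invertible. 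This produces a unique germ $(\bar p_{2},\bar x_{2})=\gamma(p_{1},x_{3})$ (note the absence of $x_{1}$, reflecting the fact that $x_{1}$ does not enter the fiber equations) of solutions near $p_{1}=0$.

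Finally, I would verify non-degeneracy by computing the Hessian of $S_{\phi}^{F}\otimes_{V}S_{\psi}^{G}$ restricted to the fiber. A direct computation yields
\[
\mathrm{Hess} = \begin{pmatrix} \partial_{p}^{2}G(p_{2},x_{3}) & -\id \\ -\id & \partial_{x}^{2}F(p_{1},x_{2}) \end{pmatrix},
\]
whose determinant equals $\det\bigl(\partial_{p}^{2}G\cdot\partial_{x}^{2}F - \id\bigr)$ (using that $\id$ commutes with any block). Evaluating at the base point, $\partial_{x}^{2}F(0,\psi(x_{3}))=0$ forces this determinant to be $\det(-\id)=\pm 1\neq 0$, so by continuity non-degeneracy persists on a whole neighborhood of the zero section in $\phi^{*}(\Cot U)$. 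No step is genuinely hard here; the only subtlety is bookkeeping to confirm that the resulting $\gamma$ is precisely a germ of the required form, which is immediate from the implicit function theorem applied at the germ level.
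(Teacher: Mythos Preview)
Your proof is correct and follows essentially the same route as the paper: write out the sum explicitly, read off the critical-point equations in $(p_{2},x_{2})$, check that $(0,\psi(x_{3}))$ solves them when $p_{1}=0$, and then apply the implicit function theorem using that the relevant Jacobian is (lower) triangular with identity diagonal blocks at the base point. The only cosmetic difference is that you separate the implicit-function-theorem Jacobian from the Hessian of $S$ on the fiber, whereas the paper combines these into one matrix computation; the two matrices agree up to sign and a reordering of rows, so this is not a genuine divergence.
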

\begin{proof}
By definition, we have:\begin{eqnarray*}
(S_{\phi}^{F}\otimes_{V}S_{\psi}^{G})(p_{1},x_{1},x_{2},p_{2},x_{3}) & = & p_{1}x_{1}+F(p_{1},x_{2})+G(p_{2},x_{3})-p_{2}x_{2}.\end{eqnarray*}
Now, the critical points along $\pi^{-1}(p_{1},x_{1},x_{3})=(\R_{V}^{l})^{*}\times V$
are the points $(\bar{p}_{2},\bar{x}_{2})$ in the fiber satisfying
the equation\begin{eqnarray*}
\frac{\partial\big(S_{\phi}^{F}\otimes_{V}S_{\psi}^{G}\big)}{\partial(p_{2},x_{2})}\big(p_{1},x_{1},\bar{x}_{2},\bar{p}_{2},x_{3}\big) & = & 0,\end{eqnarray*}
which is exactly equivalent to the system \eqref{eq:CritPointP}-\eqref{eq:CritPointX},
since\[
\frac{\partial\big(S_{\phi}^{F}\otimes_{V}S_{\psi}^{G}\big)}{\partial(p_{2},x_{2})}\big(p_{1},x_{1},\bar{x}_{2},\bar{p}_{2},x_{3}\big)=\left(\begin{array}{c}
\bar{p}_{2}-\partial_{x_{2}}F(p_{1},\bar{x}_{2})\\
\bar{x}_{2}-\partial_{p_{2}}G(\bar{p}_{2},x_{3})\end{array}\right).\]
Thanks to the fact that $F(0,x_{1})=0$ and $\partial_{p_{2}}G(0,x_{3})=\psi(x_{3})$,
we get\[
\frac{\partial\big(S_{\phi}^{F}\otimes_{V}S_{\psi}^{G}\big)}{\partial(p_{2},x_{2})}\big(0,x_{1},\psi(x_{3}),0,x_{3}\big)=0\]
for all $x_{1}$, meaning that $\gamma(0,x_{1},x_{3})=(0,\psi(x_{3}))$.
In particular, this is true for $x_{1}=\phi\circ\psi(x_{3})$. Now, the Hessian
at this point is \[
\frac{\partial^{2}\big(S_{\phi}^{F}\otimes_{V}S_{\psi}^{G}\big)}{\partial^{2}(p_{2},x_{2})}\big(0,\phi\circ\psi(x_{3}),\psi(x_{3}),0,x_{3}\big)=\left(\begin{array}{cc}
\id & 0\\
-\partial_{p}\partial_{x}G(0,x_{3}) & \id\end{array}\right),\]
which is invertible. Now, for small $p_{1},$ the implicit function
theorem gives us a section of critical points \begin{eqnarray*}
\big[B(V,W),Z_{\phi\circ\psi}\big] & \overset{\gamma}{\longrightarrow} & \Big[(B(U,V)\times_{V}B(V,W),Z_{\phi}\times_{V}Z_{\psi}\Big].\end{eqnarray*}
\end{proof}
\begin{defn}
Given a function $f\in C^{\infty}(\R^{k})$ with only one critical
point on $\R^{k}$, we denote by $\stat{x}\{f\}$ the value of $f$
at this point $x_{0}$ where $\partial_{x}f(x_{0})=0.$ If $f$ also
depends on a variable $y$ in $\R^{l}$, we denote by $\stat{x}\{f\}(y)$
the function depending on $y$ defined by $f(x_{0}(y),y)$ where $x_{0}(y)$
is the implicit function solution of the equation $\partial_{x}f(x_{0}(y),y)=0$. 
\end{defn}
We now immediately have the following proposition:
\begin{prop}
 \begin{eqnarray*}
(G\circ F)(p_{1},x_{3}) & = & \stat{\bar{p},\bar{x}}\Big\{ F(p_{1},\bar{x})+G(\bar{p},x_{3})-\bar{p}\bar{x}\Big\}.\end{eqnarray*}
Moreover,\[
F\,=\, I\circ F\,=F\circ I,\]
where $I(p,x)=\langle p,x\rangle$. 
\end{prop}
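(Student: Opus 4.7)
The plan is to apply the reduction principle recalled above to the joint generating family $S_\phi^F \otimes_V S_\psi^G$, using the critical section $\gamma$ provided by the preceding lemma, and then read off $G \circ F$ from the $x_1$-independent part of the reduced generating function.

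Concretely, by the composition formula and the reduction principle one has
\[
S_\psi^G \circ S_\phi^F \;=\; (S_\phi^F \otimes_V S_\psi^G) \circ \gamma.
\]
Substituting the explicit forms $S_\phi^F(p_1, x_1, x_2) = F(p_1, x_2) - p_1 x_1$ and $S_\psi^G(p_2, x_2, x_3) = G(p_2, x_3) - p_2 x_2$, the joint generating function becomes
\[
(S_\phi^F \otimes_V S_\psi^G)(p_1, x_1, x_2, p_2, x_3) = -p_1 x_1 + F(p_1, x_2) + G(p_2, x_3) - p_2 x_2,
\]
and the summand $-p_1 x_1$ depends only on variables external to the fiber $\pi^{-1}(p_1, x_1, x_3)$. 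Hence it is unaffected by evaluation at $\gamma$ and factors out of the stationary operation, giving
\[
S_{\phi \circ \psi}^{G \circ F}(p_1, x_1, x_3) = -p_1 x_1 + \stat{\bar p, \bar x}\bigl\{F(p_1, \bar x) + G(\bar p, x_3) - \bar p \bar x\bigr\}.
\]
Comparing with the defining relation $S_{\phi \circ \psi}^{G \circ F}(p_1, x_1, x_3) = -p_1 x_1 + (G \circ F)(p_1, x_3)$ yields the main formula.

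Next I would verify the identity relations $F = I \circ F = F \circ I$ by direct substitution into the formula just obtained. For $I \circ F$ the expression to stationarize is $F(p_1, \bar x) + \bar p x_3 - \bar p \bar x$; its critical equations $\bar p = \partial_x F(p_1, \bar x)$ and $\bar x = x_3$ have the unique solution $\bigl(\partial_x F(p_1, x_3), x_3\bigr)$, at which the expression reduces to $F(p_1, x_3)$. The case $F \circ I$ is symmetric: the integrand is $p_1 \bar x + F(\bar p, x_3) - \bar p \bar x$, with critical point $\bigl(p_1, \partial_p F(p_1, x_3)\bigr)$, again returning $F(p_1, x_3)$.

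I do not anticipate any deep obstacle. The only step requiring attention is that the $x_1$-linear summand factors cleanly out of the stationary evaluation, and this is transparent from the fact that the critical equations \eqref{eq:CritPointP}--\eqref{eq:CritPointX} of the preceding lemma involve only $p_1$ and $x_3$. Existence, uniqueness, and non-degeneracy of the critical point, needed for the $\textbf{Stat}$-notation to be meaningful, were already discharged by the implicit function argument in the lemma, so the proof reduces to the short substitutions above.
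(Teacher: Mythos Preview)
Your proposal is correct and follows exactly the route the paper intends: the paper states the proposition as an immediate consequence of the preceding lemma (it writes ``We now immediately have the following proposition''), and your argument simply unpacks that immediacy by applying the reduction principle to $S_\phi^F\otimes_V S_\psi^G$, factoring out the $-p_1x_1$ term, and reading off $G\circ F$, then checking the identity relations by direct substitution.
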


\subsubsection{Enhancement composition formula\label{sub:Enhancement-composition-formula}}

We denote by $E(K)$ the space of enhancements of a symplectic micromorphism
$K:\Cot U\rightarrow\Cot V$.  In the local setting, the micromorphism
$K$ is associated to a unique generating family $(B(U,V),S_{\phi}^{F})$.
Suppose now that we have another symplectic micromorphism $L:\Cot V\rightarrow\Cot W$
with generating family $(B(V,W),S_{\psi}^{G})$. We want to define
a composition for enhancements\begin{eqnarray*}
\circ:E(K)\times E(L) & \longrightarrow & E(L\circ K).\end{eqnarray*}
In the previous paragraph, we have seen that we have a fibration\begin{eqnarray*}
B(U,V)\times_{V}B(V,W) & \overset{\pi}{\longrightarrow} & B(U,W)\end{eqnarray*}
on whose fiber the generating function $S_{\phi}^{F}\otimes_{V}S_{\psi}^{G}$ has a single non degenerate critical point.
The evaluation of $S_{\phi}^{F}\otimes S_{\psi}^{G}$ at this critical point is exactly $S_{\psi}^{G}\circ S_{\phi}^{F}$.
For $[a]\in E(K)$ and $[b]\in E(L)$, we define the enhancement composition
as
\begin{equation}
\big(a\circ b\big)(p_{1},x_{3}) := \SCint_{\pi^{-1}(p_{1},\phi\circ\psi(x_{3}),x_{3})}a(p_{1},x_{2})b(p_{2},x_{3})e^{\frac{i}{\hbar}\Theta(F,G)}\frac{dp_{2}dx_{2}}{(2\pi\hbar)^{l}},\label{eq:half-density composition}
\end{equation}
where $\Theta(F,G)$ is defined as the difference\begin{eqnarray*}
\Theta(F,G) & = & S_{\phi}^{F}\otimes_{V}S_{\psi}^{G}-\pi^{*}(S_{\psi}^{G}\circ S_{\phi}^{F}).\end{eqnarray*}

\begin{rem}
Observe that the leading term in $\hbar$ of the stationary phase
asymptotic expansion of $a\circ b$ is proportional to $a(p_{1},\overline{x}_{2})b(\overline{p}_{2},x_{3})$
where $(\bar{p}_{2},\bar{x}_{2})$ is the critical point of $S_{\phi}^{F}\otimes S_{\psi}^{G}$
on the fiber $\pi^{-1}(p_{1},\psi\circ\phi(x_{3}),x_{3})$. This is
the usual composition of half-densities carried by composable canonical
relations. \end{rem}
\begin{prop}
In the notation above, we have that \begin{eqnarray*}
([a],S_{\phi}^{F}) & = & ([1],S_{\id}^{I})\circ([a],S_{\phi}^{F})\\
 & = & ([a],S_{\phi}^{F})\circ([1],S_{\id}^{I})\end{eqnarray*}
for any enhanced symplectic micromorphism $([a],S_{\phi}^{F})$. 
\end{prop}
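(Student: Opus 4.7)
The previous proposition already establishes $I\circ F = F\circ I = F$ at the level of generating functions, so the symplectic-micromorphism components of both claimed equalities agree automatically. What remains is a pure enhancement identity: I need to show that the oscillatory-integral composition formula \eqref{eq:half-density composition} yields $1\circ a = a$ (left identity) and $a\circ 1 = a$ (right identity), modulo $\hbar^\infty$. My plan is to substitute $I$ for the appropriate generating function in $\Theta$, compute the resulting phase in closed form, and exploit a structural feature: the phase depends \emph{linearly} on exactly one of the two fiber integration variables, which reduces the composition integral to a semiclassical Fourier inversion.

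For the right identity I would set $G=I$, $\psi=\id_V$ in $\Theta(F,G)$. A direct expansion, in which the $\langle p_1,x_1\rangle$ contributions from $S_\phi^F$ and $\pi^*S_\phi^F$ cancel, yields
\begin{equation*}
\Theta(F,I)(p_1,x_1,p_2,x_2,x_3) \;=\; F(p_1,x_2)-F(p_1,x_3)-\langle p_2,x_2-x_3\rangle,
\end{equation*}
which is linear in $p_2$. Performing the $p_2$-integration first produces the semiclassical Fourier kernel for a delta germ at $x_2=x_3$, and integration of $a(p_1,x_2)\,e^{\frac{i}{\hbar}(F(p_1,x_2)-F(p_1,x_3))}$ against it collapses to $a(p_1,x_3)\,e^{0}=a(p_1,x_3)$. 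The left identity is symmetric: one computes $\Theta(I,F)=\langle p_1-p_2,x_2\rangle+F(p_2,x_3)-F(p_1,x_3)$, now linear in $x_2$; integrating $x_2$ first pins $p_2=p_1$ via a delta germ, and again yields $a(p_1,x_3)$.

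The main obstacle is making the Fourier-collapse step rigorous inside the semiclassical integral $\SCint$ of Definition \ref{def:semiclassical-integral}, since delta germs are not literal densities. I see two parallel routes, and I would use whichever fits the prior exposition more cleanly. The first is to invoke a semiclassical Fourier inversion statement for oscillatory integrals whose phase is linear in one variable: such integrals evaluate, modulo $\hbar^\infty$, to the value of the remaining data at the locus where the partial derivative in that variable vanishes. The second is to run the exact stationary-phase expansion at the nondegenerate critical point supplied by the lemma preceding this proposition; by construction of $\Theta$ as a difference of two equal generating-function composites, the critical value is $0$, so the leading term is already $a(p_1,x_3)$, and the linearity of $\Theta$ in one of the fiber variables forces every higher-order term of the expansion to vanish identically. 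Either route produces equality modulo $\hbar^\infty$, which is the strongest statement available in the semiclassical category.
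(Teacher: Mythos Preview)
Your proposal is correct and follows essentially the same route as the paper: compute $\Theta$ explicitly, observe that it is linear in one of the two fiber variables, integrate that variable first to produce a semiclassical delta, and collapse the remaining integral to $a(p_1,x_3)$. The paper carries this out only for one of the two identities (with the integral written out and the delta made explicit) and declares the other ``similar''; you sketch both and add the equivalent stationary-phase justification, which is a harmless elaboration.
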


\begin{proof}
We will show only the first equality, since the second is proven similarly.
Recall that $I(p,x)=px$ and that
$I\circ F=F=F\circ I.$ We have that \begin{eqnarray*}
\big(\Theta(I,F)\big)(z) & = & p_{1}\phi(x_{3})+F(p_{1},x_{2})+I(p_{2},x_{3})-p_{2}x_{2}-p_{1}\phi(x_{3})-\big(F\circ I\big)(p_{1},x_{3})\\
 & = & F(p_{1},x_{2})+p_{2}(x_{3}-x_{2})-F(p_{1},x_{3}),\end{eqnarray*}
where $z=(p_{1},\phi(x_{3}),x_{2},p_{2},x_{2},x_{3}).$ Plugging this
into the enhancement composition formula  (\ref{eq:half-density composition}), we obtain
\begin{eqnarray*}
\big(1\circ a\big)(p_{1},x_{3}) & = & \frac{e^{-\frac{i}{\hbar}F(p_{1},x_{3})}}{(2\pi\hbar)^{l}}\int dx_{2}\,\, a(p_{1},x_{2})e^{\frac{i}{\hbar}F(p_{1},x_{2})}\int dp_{2}\,\, e^{\frac{i}{\hbar}p_{2}(x_{3}-x_{2})}.\\
 & = & e^{-\frac{i}{\hbar}F(p_{1},x_{3})}\int dx_{2}\,\, a(p_{1},x_{2})e^{\frac{i}{\hbar}F(p_{1},x_{2})}\delta_{x_{3}}(x_{2})\\
 & = & a(p_{1},x_{3}),
\end{eqnarray*}
where $\delta_{x_{3}}$ is the delta function concentrated at $x_{3}$.
\end{proof}

\begin{prop}
Let $E_{1}$ and $E_{2}$ be any two composable enhanced symplectic micromorphisms.
Then: \begin{eqnarray*}
Q_{\hbar}(E_{2})\circ Q_{\hbar}(E_{1}) & = & Q_{\hbar}(E_{2}\circ E_{1})\quad\mod\mathcal{O}(\hbar^{\infty}).\end{eqnarray*}
 Moreover, if
$Q_{\hbar}([a],L)=0$, then $[a]=0\mod\mathcal{O}(\hbar^{\infty})$\end{prop}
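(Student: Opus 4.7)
The two assertions will be established by independent arguments, both local in nature. I would work in the local setting of paragraph \ref{sub:Local-theory}, writing $E_1 = ([a], S_\phi^F)$ and $E_2 = ([b], S_\psi^G)$.

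For the composition identity, my plan is to unfold each side as a semiclassical oscillatory integral and show they coincide. The iterated application $Q_\hbar(E_2) \circ Q_\hbar(E_1)$ to $u \in \xH(U)$ yields, after a semiclassical Fubini step, a double integral over $(p_1, x_1, p_2, x_2)$ with phase $S_\phi^F(p_1, x_1, x_2) + S_\psi^G(p_2, x_2, x_3)$ and amplitude proportional to $a(p_1, x_2)\,b(p_2, x_3)\,u(x_1)$. On the other side, substituting the definition \eqref{eq:half-density composition} of $a \circ b$ into the representation \eqref{eq:local-integral-representation} for $Q_\hbar(E_2 \circ E_1)$ yields a double integral whose phase is $S_{\phi \circ \psi}^{G \circ F}(p_1, x_1, x_3) + \Theta(F, G)$. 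The key observation is that these phases agree identically: from the explicit form $S_\phi^F = -p_1 x_1 + F(p_1, x_2)$, both $S_\phi^F\otimes_V S_\psi^G$ and $\pi^*(S_\psi^G\circ S_\phi^F)$ contribute the same term $-p_1 x_1$, so $\Theta$ is independent of $x_1$, and adding back $S_{\phi\circ\psi}^{G\circ F}$ reproduces $S_\phi^F + S_\psi^G$ on the nose. The amplitudes match manifestly, and the semiclassical Fubini reordering is justified by the strongly transversal composition of wave fronts established in the preceding section.

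For the injectivity, my plan is to recover $a$ pointwise by stationary phase against coherent test states. For a target point $(q, x_2^0)$ in the microfold of the zero section of $\phi^*(\Cot U)$, set $x_1^0 := \phi(x_2^0) + \partial_p f(q, x_2^0)$ and apply $Q_\hbar([a], L)$ to a half-density of the form $\Psi_q(x_1) = \chi(x_1) e^{\frac{i}{\hbar}\langle q, x_1\rangle}\sqrt{dx_1}$, where $\chi$ is a bump function supported near $x_1^0$ with $\chi(x_1^0) \neq 0$. The resulting integral has in $(p_1, x_1)$ the unique critical point $(q, x_1^0)$, non-degenerate because its Hessian reduces on the zero section to the standard symplectic pairing matrix. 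The leading stationary phase term evaluated at $x_2^0$ is a nonzero multiple of $a(q, x_2^0)$, so $Q_\hbar([a], L) = 0 \mod \mathcal{O}(\hbar^\infty)$ forces $a(q, x_2^0) = 0$. Iterating with test states whose bump functions isolate successive derivatives at $x_1^0$, and varying $(q, x_2^0)$ over a neighborhood of the zero section, recovers the full jet of $a$ at every point, whence $[a] = 0 \mod \mathcal{O}(\hbar^\infty)$.

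The main obstacle is the inductive bookkeeping in the second part: the stationary phase corrections are explicit differential operators in $a$ at the critical point, and one must verify that the system of corrections of all orders forms an invertible triangular system permitting the recursive extraction of Taylor coefficients. This is standard microlocal technology but requires careful indexing. The composition argument is by comparison routine once the phase identity $\Theta(F,G) + S_{\phi\circ\psi}^{G\circ F} = S_\phi^F + S_\psi^G$ is observed.
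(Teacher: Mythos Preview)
Your composition argument is essentially the paper's proof: both unfold $Q_\hbar(E_2)\circ Q_\hbar(E_1)$ as an oscillatory integral in $(p_1,x_1,p_2,x_2)$ and then recognize it as $Q_\hbar(E_2\circ E_1)$ via the phase identity $\Theta(F,G)+S_{\phi\circ\psi}^{G\circ F}=S_\phi^F\otimes_V S_\psi^G$. The paper writes this as ``multiply and divide by $e^{\frac{i}{\hbar}(S_\psi^G\circ S_\phi^F)}$'', which is the same trick; your observation that $\Theta$ is independent of $x_1$ is exactly what makes this work, since both $S_\phi^F$ and $\pi^*(S_\psi^G\circ S_\phi^F)$ carry the same $-p_1x_1$ term.

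For the injectivity statement the paper simply declares it ``obvious'' and gives no argument, so your coherent-state proof is genuinely more than what the paper offers. Your approach is correct: testing against $\chi(x_1)e^{\frac{i}{\hbar}\langle q,x_1\rangle}$ produces a phase $\langle q-p_1,x_1\rangle+F(p_1,x_2)$ with the unique nondegenerate critical point $(p_1,x_1)=(q,\partial_pF(q,x_2))$, and the leading stationary-phase term reads off $a(q,x_2)$. A slightly cleaner way to finish than varying the bump function is to induct on the $\hbar$-expansion of $a$: once the leading term $a_0$ is shown to vanish everywhere, the next stationary-phase coefficient involves $a_1$ plus differential expressions in $a_0=0$, so $a_1\equiv 0$, and so on. This avoids the ``triangular system'' bookkeeping you flag as the main obstacle. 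Alternatively, one can read the operator at fixed $x_2$ as conjugate-Fourier applied to $a(\,\cdot\,,x_2)e^{\frac{i}{\hbar}f(\,\cdot\,,x_2)}$, from which injectivity is immediate; this is presumably what the paper has in mind by ``obvious''.
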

\begin{proof}
Suppose we have  enhanced symplectic micromorphisms $E_{1}=([a],S_{\phi}^{F})$
and $E_{2}=([b],S_{\psi}^{G})$ in the situation\[
\Cot U\overset{E_{1}}{\longrightarrow}\Cot V\overset{E_{2}}{\longrightarrow}\Cot W.\]
A straightforward computation yields\begin{eqnarray*}
I & = & \Big(Q_{\hbar}(E_{2})\circ Q_{\hbar}(E_{1})\Psi\Big)(x_{3})\\
 & = & \int_{\R^{l}\times V}\frac{dp_{2}dx_{2}}{(2\pi\hbar)^{l}}b(p_{2},x_{3})\big(Q_{\hbar}(E_{1})\Psi\big)(x_{2})e^{\frac{i}{\hbar}S_{\Psi}^{G}(p_{2},x_{2},x_{3})},\\
 & = & \int_{\R^{l}\times V}\frac{dp_{2}dx_{2}}{(2\pi\hbar)^{l}}\int_{\R^{k}\times U}\frac{dp_{1}dx_{1}}{(2\pi\hbar)^{k}}a(p_{1},x_{2})b(p_{2},x_{3})\Psi(x_{1})e^{\frac{i}{\hbar}\big(S_{\Psi}^{G}(p_{2},x_{2},x_{3})+S_{\phi}^{F}(p_{1},x_{2},x_{3})\big)}.\end{eqnarray*}
Now, interchanging the integrals and writing 1 as\begin{eqnarray*}
1 & = & e^{\frac{i}{\hbar}\big(S_{\psi}^{G}\circ S_{\phi}^{F}\big)(p_{1},\phi\circ\psi(x_{3}),x_{3})}e^{-\frac{i}{\hbar}\big(S_{\psi}^{G}\circ S_{\phi}^{F}\big)(p_{1},\phi\circ\psi(x_{3}),x_{3})},\end{eqnarray*}
we obtain that\begin{eqnarray*}
I & = & \int_{\R^{k}\times U}\frac{dp_{1}dx_{1}}{(2\pi\hbar)^{k}}(a\circ b)(p_{1},x_{3})\Psi(x_{1})e^{\frac{i}{\hbar}\big(S_{\psi}^{G}\circ S_{\phi}^{F}\big)(p_{1},\phi\circ\psi(x_{3}),x_{3})},\\
 & = & Q_{\hbar}(E_{2}\circ E_{1}).\end{eqnarray*}
Of course, all the computations should be understood modulo $\mathcal{O}(\hbar^{\infty})$
and with the appropriate cut-off functions thrown in. The fact that
$Q_{\hbar}([a],L)=0$ implies that $[a]=0$ mod $\hbar^{\infty}$
is obvious. \end{proof}

\begin{cor}
Enhanced local symplectic micromorphisms form a category.\end{cor}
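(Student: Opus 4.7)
The plan is to deduce the category structure directly from the immediately preceding proposition, which does essentially all the work: it shows that the quantization map $Q_\hbar$ is a semigroup homomorphism modulo $\mathcal{O}(\hbar^\infty)$ (first assertion) and is injective modulo $\mathcal{O}(\hbar^\infty)$ on enhancements (second assertion). The key insight is that these two facts let us transfer the category axioms from honest composition of operators—which is automatically associative—back to the enhancement composition we have just defined.

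First, I would check that the class of enhanced local symplectic micromorphisms is closed under composition: given $E_1 \colon \Cot U \to \Cot V$ and $E_2 \colon \Cot V \to \Cot W$, the composite $E_2 \circ E_1$ is constructed in Section \ref{sub:Enhancement-composition-formula} as the pair $(a\circ b,\, S_\psi^G \circ S_\phi^F)$, where the generating function composition gives a well-defined symplectic micromorphism $\Cot U \to \Cot W$ (by the generating function composition formula) and the enhancement composition \eqref{eq:half-density composition} gives a half-density germ on it via semiclassical fiber integration, which lies in $|\Omega_\hbar|^{\frac12}$ modulo $\hbar^\infty$.

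Second, for associativity, I would consider three composable enhanced micromorphisms $E_1, E_2, E_3$ and apply the previous proposition twice on each side:
\begin{align*}
Q_\hbar\big((E_3\circ E_2)\circ E_1\big) &= Q_\hbar(E_3\circ E_2)\circ Q_\hbar(E_1) = Q_\hbar(E_3)\circ Q_\hbar(E_2)\circ Q_\hbar(E_1)\\
&= Q_\hbar(E_3)\circ Q_\hbar(E_2\circ E_1) = Q_\hbar\big(E_3\circ(E_2\circ E_1)\big),
\end{align*}
all equalities holding modulo $\mathcal{O}(\hbar^\infty)$. Applying the second part of the proposition to the difference of enhancements then gives $(E_3\circ E_2)\circ E_1 = E_3\circ(E_2\circ E_1)$ modulo $\hbar^\infty$.

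Third, for the identity axiom, the existence of a two-sided unit is precisely the content of the proposition asserting $([1],S_{\id}^I)\circ ([a],S_\phi^F) = ([a],S_\phi^F) = ([a],S_\phi^F)\circ ([1],S_{\id}^I)$; thus for each object $U$ the enhanced micromorphism $([1], S_{\id}^I)$ serves as $\id_U$. No step here is really an obstacle, since all the analytic content has been absorbed into the preceding proposition; the only thing to be mildly careful about is that equalities are taken modulo $\hbar^\infty$ throughout, so the category is really the quotient category in which morphisms are equivalence classes of enhanced symplectic micromorphisms under the relation $[a]\sim [a']$ iff $[a]-[a']=0 \mod \hbar^\infty$, which is exactly the setting in which the injectivity clause of the previous proposition is an honest cancellation law.
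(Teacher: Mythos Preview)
Your proposal is correct and follows essentially the same route as the paper: associativity is lifted from operator composition via the homomorphism property $Q_\hbar(E_2)\circ Q_\hbar(E_1)=Q_\hbar(E_2\circ E_1)$ and the injectivity clause of the preceding proposition, while the unit is supplied by the earlier identity $([1],S_{\id}^I)\circ([a],S_\phi^F)=([a],S_\phi^F)=([a],S_\phi^F)\circ([1],S_{\id}^I)$. If anything, your write-up is slightly more complete than the paper's, which only spells out the associativity step.
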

\begin{proof}
Let $([a_{i}],T_{i})$ , $i=1,2,3$, be composable enhanced local
symplectic micromorphisms, and set $Q_{i}:=Q_{\hbar}([a_{i}],T_{i})$
be their quantization. We need to prove that\begin{eqnarray*}
[a_{3}\circ(a_{2}\circ a_{1})] & = & [a_{3}\circ(a_{2}\circ a_{1})].\end{eqnarray*}
We know that associativity holds at the level of quantization:\begin{eqnarray*}
Q_{3}(Q_{2}Q_{1}) & = & (Q_{3}Q_{2})Q_{1}.\end{eqnarray*}
Now, we also have that \begin{eqnarray*}
Q_{3}(Q_{2}Q_{1}) & = & Q_{\hbar}\Big([(a_{3}\circ(a_{2}\circ a_{1})],T_{3}\circ T_{2}\circ T_{1}\Big),\\
(Q_{3}Q_{2})Q_{1} & = & Q_{\hbar}\Big([(a_{3}\circ a_{2})\circ a_{1})],T_{3}\circ T_{2}\circ T_{1}\Big),\end{eqnarray*}
and, therefore, $[a_{3}\circ(a_{2}\circ a_{1})]=[a_{3}\circ(a_{2}\circ a_{1})]\mod\hbar^{\infty}.$ \end{proof}
\begin{example}
\label{exa:Moyal}We have see that the quantization $Q_{\hbar}([a],S_{\id})$
of the enhanced symplectic micromorphism\begin{eqnarray*}
([a],\Cot\id):\Cot\R^{n} & \longrightarrow & \Cot\R^{n}\end{eqnarray*}
coincides with the standard quantization $\xOp_{\hbar}(a)$ of $a$
seen as a symbol (germ) in $C^{\infty}(\Cot\R^{n})$. Therefore,
the enhancement composition degenerates in this case to the star
product $\star_{qp}$ defined by the identity
\begin{eqnarray*}
\xOp_{\hbar}(a)\circ\xOp_{\hbar}(b) & = & \xOp_{\hbar}(a\star_{qp}b),
\end{eqnarray*}
giving the composition of symbols in the qp-ordering for pseudodifferential operators.
\end{example}

The next example shows that one can recover the quantization out of the
enhancement composition; so, in our case, enhancing is quantizing.

\begin{example}
Enhancements $(\Psi,\e_{U})$ of the unique symplectic micromorphism
$\e_{U}:\neutral\rightarrow\Cot U$ can be identified with the space
$\Psi\in L_{\hbar}^{2}(U)$. Let $([a],T)$ be a symplectic micromorphism
from $\Cot U$ to $\Cot U$. An easy computation shows that composition
degenerates into quantization; namely, we have that \begin{eqnarray*}
[a]\circ\Psi & = & Q_{\hbar}([a],T)\Psi.\end{eqnarray*}

\end{example}

\subsubsection{States and costates}
We call a \textbf{state} on $U$ an enhanced symplectic micromorphism
$(\Psi,e_{U})$ from the cotangent microbundle of the point $\mathbf E$ to
$\Cot U$. As is clear from Example \ref{Ex: unit}, the set of states
on $U$ can be identified with the Hilbert space $Q_{\hbar}(\Cot U)=L_{\hbar}^{2}(U)$,
and we will use the Dirac {}``ket'' notation\begin{eqnarray*}
(\Psi,e_{U}) & \leftrightsquigarrow & |\Psi\rangle,\end{eqnarray*}
for states. An enhanced symplectic micromorphism $A=([a],S_{\phi}^{F})$
from $\Cot U$ to $\Cot V$ induces an operator $\hat{A}$ from the
states on $U$ to the states on $V$ by  composition\begin{eqnarray*}
\hat{A}|\Psi\rangle & := & A\circ(\Psi,e_{U}).\end{eqnarray*}
The following proposition tells us that the quantization functor stems
from the composition of enhancements:
\begin{prop}
Let $A:\Cot U\rightarrow\Cot V$ be an enhanced symplectic micromorphism,
the\textup{n\begin{eqnarray*}
\hat{A}\,|\Psi\rangle & =|Q_{\hbar}(A)\Psi\rangle & ,\end{eqnarray*}
}where $\Psi$\textup{ is a state on $U$.}\end{prop}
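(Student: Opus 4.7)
The plan is to unwind the definition of $\hat A |\Psi\rangle$ and reduce the statement to the composition-quantization compatibility that has already been established. By definition, $\hat A|\Psi\rangle := A\circ(\Psi,\e_U)$, which is an enhanced symplectic micromorphism from $\Cot\neutral$ to $\Cot V$, so it is itself a state on $V$. Writing this state in ket form as $|\Phi\rangle$ for some $\Phi\in L_\hbar^2(V)$, the content of the proposition is that $\Phi=Q_\hbar(A)\Psi$.

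First I would invoke the compatibility between quantization and enhancement composition proved in Section~\ref{sub:Enhancement-composition-formula}:
\begin{eqnarray*}
Q_\hbar\big(A\circ(\Psi,\e_U)\big) & = & Q_\hbar(A)\circ Q_\hbar(\Psi,\e_U) \pmod{\mathcal O(\hbar^{\infty})}.
\end{eqnarray*}
Next I would apply Example~\ref{Ex: unit}, according to which the quantization of the enhanced unit micromorphism $(\Psi,\e_U)$ is the map $\C[[\hbar]]\rightarrow L^2_\hbar(U)$ sending the generator $1$ to $\Psi$. Evaluating the composition identity above on $1\in\C[[\hbar]]$, the right-hand side becomes $Q_\hbar(A)\Psi\in L^2_\hbar(V)$.

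On the other hand, applying Example~\ref{Ex: unit} again but now to the enhanced micromorphism $A\circ(\Psi,\e_U):\Cot\neutral\to\Cot V$, its quantization evaluated at $1$ is exactly the wave function $\Phi\in L_\hbar^2(V)$ that represents it under the state-ket identification. Equating the two sides yields $\Phi=Q_\hbar(A)\Psi$ modulo $\mathcal O(\hbar^{\infty})$, i.e.\ $\hat A|\Psi\rangle=|Q_\hbar(A)\Psi\rangle$ in the semiclassical Hilbert space.

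The only conceptual step that needs care is checking that the ket-identification of states on $V$ with elements of $L_\hbar^2(V)$ is genuinely realized by the quantization functor applied to the enhancement, i.e.\ that the bijection $(\Phi,\e_V)\leftrightsquigarrow|\Phi\rangle$ agrees with the map that sends an enhanced unit micromorphism $E$ to $Q_\hbar(E)(1)$; this is essentially the content of Example~\ref{Ex: unit} and poses no real obstacle. Everything else is formal functoriality, so this is a short proof rather than a computation.
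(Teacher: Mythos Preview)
Your argument is correct, but it follows a different route from the paper's own proof. The paper proceeds by a direct computation: writing $A=([a],S_\phi^F)$, it unwinds the enhancement composition formula for $a\circ\Psi$, computes the phase $\Theta(0,F)$ and observes that it equals $S_\phi^F$ itself, so that the oscillatory integral defining $a\circ\Psi$ literally coincides with the integral representation \eqref{eq:local-integral-representation} of $Q_\hbar(A)\Psi$. In other words, the paper redoes the relevant piece of the stationary-phase bookkeeping in this special case.

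You instead invoke the already-established functoriality $Q_\hbar(E_2\circ E_1)=Q_\hbar(E_2)\circ Q_\hbar(E_1)$ together with the identification $Q_\hbar(\Phi,\e_V)(1)=\Phi$ from Example~\ref{Ex: unit}, and read off the result by evaluating both sides at $1\in\C[[\hbar]]$. This is legitimate: the functoriality proposition and Example~\ref{Ex: unit} both precede the statement in question, and the uniqueness of the symplectic micromorphism $\e_V$ from the point guarantees that $A\circ(\Psi,\e_U)$ is of the form $(\Phi,\e_V)$, so the ket identification applies. Your proof is shorter and makes transparent that the proposition is a formal consequence of functoriality rather than a new analytic fact; the paper's direct computation, on the other hand, is independent of the earlier functoriality result and exhibits explicitly the degeneration of the composition phase to $S_\phi^F$.
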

\begin{proof}
Set $A=([a],S_{\phi}^{F})$. Then, we have by definition that $\hat{A}|\Psi\rangle=(a\circ\Psi,e_{V})$.
Now, a straightforward computation yields that $\Theta(0,F)=S_{\phi}^{F}$,
which in turns gives us that\begin{eqnarray*}
a\circ\Psi(x_{2}) & = & \SCint\Psi(x_{1})a(p_{1},x_{2})e^{\frac{i}{\hbar}S_{\phi}^{F}(p_{1},x_{1},x_{2})}\frac{dp_{1}dx_{1}}{(2\pi\hbar)^{l}}\\
 & = & Q_{\hbar}(A)\Psi(x_{2}).\end{eqnarray*}

\end{proof}
A \textbf{costate} on $U$ is an enhanced symplectic micromorphism
from $\Cot U$ to $E$. It is completely determined by the data
of a point $x\in U$, a germ of a lagrangian submanifold $[L]$ around
$x$ in $\Cot U$ transversal to the zero section and an enhancement
$[a]:\Cot_{x}U\rightarrow\R$ as in Example \ref{Ex: counit}. We
will use the Dirac {}``bra'' notation\begin{eqnarray*}
\big([a],S_{x}^{F}\big) & \leftrightsquigarrow & \langle a,F,x|\end{eqnarray*}
to denote costates, where $S_{x}^{F}$ is the generating function
of $[V]$. Note that the set of costates does not itself form a vector
space; however, one may consider the free vector space generated by
costates. Using the composition of enhanced symplectic micromorphisms,
we can define a {}``pairing'' between states and costates on $U$:\begin{eqnarray*}
\big\langle a,F,x\big|\Psi\big\rangle & := & ([a],S_{x_{0}}^{F})\circ(\Psi,S_{\pr}^{0})\end{eqnarray*}
is a symplectic micromorphism from $E$ to $E$ and, thus, a power series
belonging to $\C[[\hbar]]$. 
\begin{example}
If we denote by $\langle x|$ the special costate $(1,0,x)$ on $\R$,
we obtain that \begin{eqnarray*}
\langle x_{0}|\Psi\rangle & = & \Psi(x_{0}),\end{eqnarray*}
and if we denote by $\hat{H}$ the operator on states associated to
the enhancement $([H],S_{\id}^{0}):\R\rightarrow\R$ of the identity
micromorphism, we obtain that\begin{eqnarray*}
\langle x_{0}|\hat{H}|\Psi\rangle & = & \mathcal{F}_{\hbar}^{-1}\big(H\mathcal{F}_{\hbar}(\Psi)\big)(x_{0}),\end{eqnarray*}
 and in particular \begin{eqnarray*}
\langle x_{0}|\hat{p}|\Psi\rangle & = & -i\hbar\frac{\partial\Psi}{\partial x}(x_{0}),\\
\langle x_{0}|\hat{x}|\Psi\rangle & = & x_{0}\Psi(x_{0}).\end{eqnarray*}
In view of Example \ref{exa:Moyal}, we have that $\hat{f}\circ\hat{g}=\widehat{f\star_{M}g}.$\begin{eqnarray*}
\end{eqnarray*}

\end{example}

\section{Applications and further directions\label{sec:Applications-and-further}}

In this section, we describe in an informal way some of the applications of symplectic micromorphism
enhancements and quantization. Roughly, symplectic microgeometry and
its quantized version offer a framework in which dynamics can be
expressed in a purely categorical way both at the classical and quantum
level. There is a special monoid in the microsymplectic category,
the energy monoids whose action on cotangent microbundles represent
the Hamiltonian flows of classical mechanics. Symmetries of space
are implemented by the action of general monoids in the microsymplectic
category and they correspond to the presence of a Poisson structure on
the core of the monoid. The enhancement and quantization of this picture
recovers the semi-classical version of the Schr\"odinger evolution.
Let us start with some consideration on the functoriality of our constructions, very closely related  the work of Khudaverdian and Voronov cited in the Introduction.

\subsection{Quantization functors}

In paragraph \ref{sub:Local-theory}, we restricted the category of
manifolds we started with to the category $\mathcal{E}$ formed by
the open subsets of the Euclidean spaces $\R^{n}$ endowed with the
canonical Euclidean metric. This extra data of a metric made it possible
to produce a canonical (global even!) exponential $\Psi_{x}(v)=v+x$
for each object in $\mathcal{E}$ since $\Psi$ is exactly the exponential
associated to the affine connection generated by the metric.
(Note that the only piece of extra data we need for our construction on the top of the manifold is the exponential $\Psi$ ; we do not make direct use of the metric or the affine structure.) This
allowed us to identify, in a non ambiguous way, the class of semiclassical
Fourier integral operators $\SFour_{\hbar}(\mathcal{E})$ with wave
fronts in the category $\Mic(\mathcal{E})$ formed by the symplectic
micromorphisms between the cotangent bundles on the object of $\mathcal{E}$
with the space of symplectic micromorphism enhancements. In symbols,
we had the identification\[
\SFour_{\hbar}([V],\phi)=|\Omega_{\hbar}|^{\frac{1}{2}}([V],\phi),\]
thanks to the canonical way we could associate to $([V],\phi)$ a
generating function $S_{\phi}^{f}$ and to a semiclassical FIO with
wave front $([V],\phi)$ the integral representation \eqref{eq:local-integral-representation}. 
This allowed us further to endow the corresponding collection $E(\mathcal{E})$
of enhanced symplectic micromorphisms with the structure of a category
by pulling back the FIO composition to their total symbols as in paragraph
\ref{sub:Enhancement-composition-formula}. This situation can be
summarized by the following commutative diagram of functors

\begin{diagram} 
         \SFour_\hbar(\mathcal E) && \pile{\rTo^{\sigma} \\ \lTo_{Q_\hbar}} &&  E(\mathcal E) \\
         & \rdTo_{\WF}            &                                          & \ldTo_{U} &\\
                                  &&          \Mic(\mathcal E)              &&
\end{diagram}
where $\sigma$ is the total symbol functor, $\WF$ the wave front
functor, $Q_{\hbar}$ the quantization functor, and $U$ the forgetful
functor. Observe that $\sigma$ is an isomorphism of categories with
inverse functor given by the quantization functor $Q_{\hbar}$ . 

We conclude this discussion by observing that the functorial constructions
above apply in the very exact same way to any other category $\mathcal{E}$
of manifolds carrying some extra data that make it possible to assign in a canonical
way a micro exponential to each of the manifolds. For instance, we
may take $\mathcal{E}$ to be the category of riemannian manifolds
and the canonical micro exponentials given by the germ of the Levi-Civita
connection exponential. Note also that the functor $Q_{\hbar}$ is
strictly monoidal with respect to the obvious monoidal structures
on the source and target categories. 

In the sequel, we will comment on possible applications of our construction,
sometimes assuming implicitly a suitable category $\mathcal{E}$ of
manifolds endowed with the appropriate extra data.

\subsection{The energy monoid}

The Lie algebra $\mathcal{T}$of the time translation group $(\R,+)$
is the abelian Lie algebra on $\R$. Its dual, which we denote by
$\mathcal{E}$, is thus the Poisson manifold, with zero Poisson structure. 

We call $\mathcal{T}$ the Lie algebra of time and $\mathcal{E}$
the Poisson manifold of energy. Its cotangent bundle $\Cot\Energy=\mathcal{T}\times\mathcal{E}$
is a trivial symplectic microgroupoid with source and target coinciding
with the bundle projection; the composable pair space is $\Cot\Energy\oplus\Cot\Energy$
and the groupoid product is just the addition of times in a fiber
of constant energy:\begin{eqnarray*}
m_{\Energy}\big((t_{1},E),(t_{2},E)\big) & = & (t_{1}+t_{1},E).\end{eqnarray*}
One verifies that the graph of the groupoid product is a symplectic
micromorphism\begin{eqnarray*}
\mu_{\Energy}:=\big(\graph[m_{\Energy}],\Delta_{\mathcal{E}}):\Cot\Energy\otimes\Cot\Energy & \longrightarrow & \Cot\Energy,\end{eqnarray*}
where the core map $\Delta_{\mathcal{E}}$ is the diagonal map on
$\mathcal{E}$. It is easy to see that $\mu_{\mathcal{E}}$ satisfies
the following associativity and unitality equations\begin{gather*}
\mu_{\Energy}\circ(\mu_{\Energy}\otimes\id)\;=\;\mu_{\Energy}\circ(\id\otimes\mu_{\Energy}),\\
\mu_{\Energy}\circ(e_{\Energy}\otimes\id)\;=\;\id\;=\mu_{\Energy}\circ(\id\otimes e_{\Energy}),\end{gather*}
where $ $$e_{\Energy}$ is the unique symplectic micromorphism from
the cotangent bundle of the point to $\Cot\Energy$. In other words,
$(\Cot\Energy,\mu_{\Energy})$ is a monoid object in the microsymplectic
category (see \cite{SM_Monoids} for a systematic study of these monoids).

\subsection{Classical flows as symplectic micromorphisms\label{sub:Classical-flow}}

Consider a classical hamiltonian system $H:\Cot Q\rightarrow\R$.
The time evolution $\Psi_{t}$ generated by $H$ produces a lagrangian
submanifold \[
W_{H}:=\bigg\{\Big(\big(t,H(\Psi_{t}(z))\big),z,\Psi_{t}(z)\Big):\; t\in I,\, z\in\Cot Q\bigg\},\]
of $\overline{\Cot\Energy}\times\overline{\Cot Q}\times\Cot Q$, where
$I$ is the maximal interval on which the flow $\Psi_{t}$ is defined.
It gives a symplectic micromorphism\begin{eqnarray*}
\rho_{H}:=\big([W_{H}],H_{|Z_{Q}}\times\id_{Q}\big):\Cot\Energy\otimes\Cot Q & \longrightarrow & \Cot Q.\end{eqnarray*}

\begin{prop}
Let $H\in C^{\infty}(\Cot Q)$ be a hamiltonian. Then
that \begin{eqnarray*}
\rho_{H}\circ(e_{\Energy}\otimes\id) & = & \id,\\
\rho_{H}\circ(\mu_{\Energy}\otimes\id) & = & \rho_{H}\circ(\id\otimes\rho_{H}).\end{eqnarray*}
In other words, this turns $\Cot Q$ into a $\Cot\Energy$-module
in the microsymplectic category.\end{prop}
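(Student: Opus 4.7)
The plan is to verify both equalities as identities of symplectic micromorphisms by (i) computing the underlying canonical relations set-theoretically, and (ii) separately checking strong transversality, so that the composites are genuine symplectic micromorphisms and not merely set-theoretic subsets of the ambient symplectic product. The whole argument rests on three elementary properties of the Hamiltonian flow: $\Psi_0 = \id_{\Cot Q}$; the group law $\Psi_{t_1}\circ\Psi_{t_2} = \Psi_{t_1+t_2}$ (valid microlocally, near the zero section of $\Cot\Energy$, i.e.\ near $t_1 = t_2 = 0$); and conservation of energy $H\circ\Psi_t = H$.

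For the unit law $\rho_H\circ(e_{\Energy}\otimes\id) = \id$, the canonical relation $e_{\Energy}\otimes\id$ picks out the lagrangian submicrofold $(\{0\}\times\Energy)\times\Delta_{\Cot Q}$. Composing set-theoretically with $W_H$ forces $t=0$, so the surviving points of $W_H$ have the form $((0,H(z)),z,z)$; projecting out the $\Cot\Energy$ factor leaves the diagonal $\Delta_{\Cot Q}$, which is exactly $\id_{\Cot Q}$ as a canonical relation. The core-map identification is immediate. Only $\Psi_0 = \id$ is used at this step.

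For the module axiom, I would compute both sides as canonical relations from $\Cot\Energy\otimes\Cot\Energy\otimes\Cot Q$ to $\Cot Q$. Points of $\rho_H\circ(\mu_{\Energy}\otimes\id)$ are parametrized by triples $((t_1,E),(t_2,E),z)$ with $E = H(\Psi_{t_1+t_2}(z))$ and output $\Psi_{t_1+t_2}(z)$. Points of $\rho_H\circ(\id\otimes\rho_H)$ are parametrized by $((t_1,E_1),(t_2,E_2),z)$ with $E_2 = H(\Psi_{t_2}(z))$ and $E_1 = H(\Psi_{t_1}(\Psi_{t_2}(z)))$, with output $\Psi_{t_1}(\Psi_{t_2}(z))$. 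The group law makes the two outputs coincide, while conservation of energy forces $E_1 = E_2 = H(z)$, reducing both sets of energy constraints to a single equation. Hence the two canonical relations coincide set-theoretically, and the core maps agree under this identification.

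The main obstacle will be verifying strong transversality of the two compositions, needed to upgrade the set-theoretic equalities above to equalities of symplectic micromorphisms (i.e.\ of lagrangian submicrofolds transverse to the distribution \eqref{eq:lagr-distrib}). The cleanest route I can see is to realize $\rho_H$ locally as the symplectic micromorphism associated, via Proposition \ref{pro: generating functions}, to a deformed cotangent-lift generating family built from an exponential germ on $Q$; one can then invoke the composition lemma for local generating families of paragraph \ref{sub:Local-theory}, where transversality becomes the solvability of the implicit critical-point system. In our setting that system is solvable because $\Psi_t$ is a diffeomorphism for each $t$ (so the relevant Hessian is non-degenerate), yielding the genuine symplectic micromorphism structure. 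This final step is routine but is what gives rigorous categorical meaning to the flow identities used above.
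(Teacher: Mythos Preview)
Your argument is essentially the paper's: both compute the two sides as canonical relations and invoke $\Psi_0=\id$, the group law $\Psi_{t_1}\circ\Psi_{t_2}=\Psi_{t_1+t_2}$, and conservation $H\circ\Psi_t=H$ to conclude. The paper's proof (in a section it explicitly labels as informal) stops at the set-theoretic identification and does not address the strong-transversality issue you raise; your proposed verification of that point via the local generating-family calculus is a reasonable addition that simply goes beyond what the paper provides.
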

\begin{proof}
A straightforward composition of canonical (micro) relations yields\begin{eqnarray*}
\rho_{H}\circ(\id\otimes\rho_{H}) & = & \bigg\{\bigg(t_{2},H\big(\Psi_{t_{1}}\circ\Psi_{t_{2}}(z)\big),t_{1},H\big(\Psi_{t_{1}}(z)\big),z,\Psi_{t_{2}}\circ\Psi_{t_{1}}(z)\bigg):\, z,t\bigg\},\\
\rho_{H}\circ(\mu_{\Energy}\otimes\id) & = & \bigg\{\bigg(t_{2},H\big(\Psi_{t_{1}+t_{2}}(z)\big),t_{1},H\big(\Psi_{t_{1}+t_{2}}(z)\big),z,\Psi_{t_{1}+t_{2}}(z)\bigg):\, z,t\bigg\}.\end{eqnarray*}
We see that these microfolds coincide for time independent hamiltonians,
since $\Psi_{t_{1}}\circ\Psi_{t_{2}}=\Psi_{t_{1}+t_{2}}$ and $H(\Psi_{t}(z))=H(z)$
for all $t_{1},t_{2}$ and $t$. One gets the unitality axiom in a
similar way.
\end{proof}
The Hamilton-Jacobi formulation of classical mechanics tells us that,
in  Darboux coordinates, the hamiltonian flow\begin{eqnarray*}
(p_{t},x_{t}) & = & \Psi_{t}^{H}(p,x)\end{eqnarray*}
 admits, for short times, a generating function $S$ satisfying \begin{eqnarray*}
\partial_{t}S(t,p,x_{t}) & = & H_{t}\big(p_{t},x_{t}\big),\\
\partial_{p}S(t,p,x_{t}) & = & x,\\
\partial_{x}S(t,p,x_{t}) & = & p_{t},\end{eqnarray*}
with the initial condition $S(0,p,x)=\langle p,x\rangle$. This is
precisely the generating function of the symplectic micromorphism
$\rho_{H}$.

\subsection{Classical symmetries}

It is possible to generalize the previous scheme to a general Hamiltonian
action of a Lie group $G$ on $\Cot P$ with momentum map $J:\Cot P\rightarrow\mathcal{G}^{*}$, where $\mathcal{G}$ is the Lie algebra of $G$ .
In this case, we define the symmetry submanifold to be\begin{eqnarray*}
W_{G} & := & \bigg\{\Big(\big(v,J(\exp(v)z)\big),z,\exp(v)z)\Big):\; v\in U,\, z\in\Cot Q\bigg\},\end{eqnarray*}
where $U$ is the maximal neighborhood of $0$ in the Lie algebra
$\mathcal{G}$ on which the exponential mapping $\exp:\mathcal{G}\rightarrow G$
is defined. Taking the germ of $W_{G}$ around the graph of $J_{|Q}\times\id_{Q}$,
yields a symplectic micromorphism $\rho_{G}$ from $ $$\Cot\mathcal{G}^{*}\otimes\Cot Q$ (where $\otimes$ denotes the tensor product on objects defined by the Cartesian product)
to $\Cot Q$. Now, thanks to the exponential mapping, we can define
a generating function germ from $\Cot\mathcal{G}^{*}\oplus\Cot\mathcal{G}^{*}$
to $\R$ via the formula\begin{eqnarray*}
S_{G}(v,w,\mu) & := & \Big\langle\mu,\exp^{-1}\big(\exp(v)\exp(w)\big)\Big\rangle,\end{eqnarray*}
where $\langle\,,\,\rangle$ is the canonical paring between the Lie
algebra and its dual. This generating function germ defines a symplectic
micromorphism $\mu_{G}$ from $\Cot\mathcal{G}^{*}\otimes\Cot\mathcal{G}^{*}$
to $\Cot\mathcal{G}^{*}$. One can show that $(\Cot\mathcal{G}^{*},\mu_{G})$
is a monoid and that $(\Cot Q,\rho_{G})$ is a $\Cot\mathcal{G}^{*}$-module.

\subsection{Generalized symmetries}

We can consider more general symmetries in microgeometry by allowing
arbitrary monoids $(\Cot P,\mu)$ to act on phase spaces $\Cot Q$.
It turns out that a general monoid induces a Poisson structure on
its core $P$, and, conversely, any Poisson manifold $(P,\Pi)$ induces
a monoid $(\Cot P,\mu)$ by considering the local symplectic groupoid
integrating $P$ and by taking $\mu$ to be the germ of the groupoid
product around the graph of the diagonal map on $P$ (\cite{SM_Monoids}).
Moreover, one can show that a $\Cot P$-module,\begin{eqnarray*}
\rho:\Cot P\otimes\Cot Q & \longrightarrow & \Cot Q,\end{eqnarray*}
induces a momentum map germ, i.e., a Poisson map germ $[J]$ from
the cotangent microbundle $\Cot Q$ to the Poisson manifold $P$.
Such situations may arise for instance if we start with a Lie groupoid
acting in an hamiltonian way on a phase space.

\subsection{Quantization of the energy monoid\label{sub:Quant of the energy monoid}}

A straightforward, but lengthy, computation would show that the following
enhancement $([1],\mu_{\Energy})$ of the energy monoid $(\Cot\Energy,\mu_{\Energy})$
yields a monoid in the category of enhanced symplectic microfolds.
More interesting is the quantization of this enhanced monoid: it should
produce a associative product on $\xH_{\Energy}$, that is on $C^{\infty}(\Energy)[[\hbar]]$.
Let us compute it:\begin{eqnarray*}
Q_{\hbar}(([1],\mu_{\Energy})\big(f\otimes g\big)(E) & = & \SCint\hat{f}(t_{1})\hat{g}(t_{2})e^{-\frac{i}{\hbar}S(t_{1},t_{2},E)}\frac{dt_{1}dt_{2}}{(2\pi\hbar)},\end{eqnarray*}
where $\hat{f}$ and $\hat{g}$ are the asymptotic Fourier transforms
of $f$ and $g$, and where $S$ is the generating function of the
energy monoid. Since $S=(t_{1}+t_{2})E$, we see that the quantization
of $([1],\mu_{\Energy})$ yields the extension of the usual product
of functions on $C^{\infty}(\Energy)[[\hbar]]$.

\subsection{Quantization of Poisson manifolds}

Suppose we have a monoid $(\Cot P,\mu)$ with induced Poisson structure
$\Pi$ on $P$. Since the core map of $\mu$ is the diagonal map $\Delta$
on $P$, an enhancement of $\mu$ is given by any half-density germ\[
[a]\in|\Omega_{h}|^{\frac{1}{2}}(\Cot P\oplus\Cot P)\]
around the zero-section. Given any such enhancement of $\mu$, we
obtain thus an operator \begin{eqnarray*}
\star_{\hbar}:=Q_{\hbar}([a],\mu):\xH(P)\otimes\xH(P) & \longrightarrow & \xH(P).\end{eqnarray*}
The functoriality of $Q_{\hbar}$ implies that $\star_{\hbar}$ is
an associative product if and only if the enhancement is associative,
in which case the associative algebra $(\xH(P),\star_{\hbar})$ should
be considered as the quantization of the Poisson manifold $(P,\Pi)$.
In this context, the quantization of Poisson manifolds becomes equivalent
to the existence of associative enhancements. When $P$ is an open
subset of $\R^{n}$ we have the identification of $\xH(P)$ with $C^{\infty}(P)[[\hbar]]$,
and we can write the product as \begin{eqnarray*}
f\star_{\hbar}g(x) & = & \SCint\hat{f}(p_{1})\hat{g}(p_{2})a(p_{1},p_{2},x)e^{\frac{i}{\hbar}S(p_{1},p_{2},x)}\frac{dp_{1}dp_{2}}{(2\pi)^{m}},\end{eqnarray*}
where $S$ is the generating function of $\mu$. Note that when the
Poisson structure is the zero Poisson structure, or, equivalently,
when $\mu$ is the graph of vector bundle addition in $\Cot P$, we
have that the generating function is $S=\langle p_{1}+p_{2},x\rangle$
as in the energy monoid case. The associative enhancement $([1],\mu)$
gives us back the usual product of functions. General associative
enhancements of general monoids $(\Cot P,\mu)$ corresponding to non-zero
Poisson structures should yield star-products on $C^{\infty}(P)[[\hbar]]$
upon asymptotic expansion. The first non trivial case is given by
the integral representation of the Moyal product on $\R^{2n}$ endowed
with its canonical symplectic form. 

This is to be related to a quantization program for Poisson manifolds
relying on asymptotic integrals of the Moyal type and on the symplectic
groupoid of the Poisson manifold (see \cite{Karasev1986,Weinstein1990,Zakrzewski1990}).
In this approach, a star-product should be obtained from the asymptotic
expansion of a semiclassical Fourier integral operator whose phase
is the generating function of the symplectic groupoid. In the special
case of symplectic symmetric spaces, similar integrals were worked
out in \cite{UniV,CR2005,RA2004,Rios2008}.
For the linear Poisson
structures, the paper \cite{AD2002} gives a integral version of Kontsevich's
star-product, where the generating function is the Baker-Campbell-Hausdorff
formula of the associated Lie algebra. In \cite{CDF2005}, it has
been shown that this latter generating function is exactly the generating
function of the corresponding symplectic groupoid and that, more generally,
generating functions of the local symplectic groupoid for Poisson
structures on open subsets of $\R^{n}$, can be extracted from the
tree-level part of Kontsevich star-product (\cite{Kontsevich2003}). 

In the same spirit,  Wagemann and one of the authors \cite{Leibniz} start by showing that the Gutt star-product 
can be understood as the quantization in the above sense of a  symplectic micromorphism, and then they extend this construction to quantize Leibniz algebras. 
Both in \cite{Leibniz}  and \cite{MomMap}, formal deformation quantizations are obtained from symplectic micromorphism quantization by taking Feynman expansions 
of the corresponding oscillatory integrals. 

\subsection{Quantization of symmetries}

Suppose now that we have a general symmetry\begin{eqnarray*}
\rho:\Cot P\otimes\Cot Q & \longrightarrow & \Cot Q,\end{eqnarray*}
where $(\Cot P,\mu)$ is a general monoid with induced Poisson structure
$\Pi$ on $P$, acting on $\Cot Q$. One can show that the core map
of $\rho$ is $J_{|Z_{Q}}\times\id_{Q}$, where $J:\Cot Q\rightarrow P$
is a Poisson map germ around the zero section of $\Cot Q$, which
can be considered as the momentum map of the the action. An enhancement
of $\rho$ is thus a half-density germ\[
[b]\in|\Omega_{\hbar}|^{\frac{1}{2}}\big(J_{|Q}^{*}(\Cot P)\oplus\Cot Q\big)\]
around the zero section. Quantizing this data, we obtain an operator\begin{eqnarray*}
Q_{\hbar}([b],\rho):\xH_{P}\otimes\xH_{Q} & \longrightarrow & \xH_{Q},\end{eqnarray*}
which is a representation of the quantum algebra $(\xH_{P},\star_{\hbar})$
on $\mathcal{H}_{Q}$ provided that \begin{eqnarray*}
[b]\circ([1]\otimes[b]) & = & [b]\circ([a]\otimes[1]).\end{eqnarray*}
This approach has been used in \cite{MomMap},  \cite{ActQuant}, and \cite{GSystems} to quantize 
group actions through their cotangent lifts as well as to quantize their momentum maps. 

\subsection{Quantization of the classical flow}

We want to quantize the symplectic micromorphism $\rho_{H}$ associated
to the classical flow $\Psi_{t}^{H}$ on $\Cot\R^{n}$ of a hamiltonian
$H:\Cot\R^{n}\rightarrow\R$ as described in paragraph \ref{sub:Classical-flow}.
Since, in general, $Q_{\hbar}(\Cot\R^{n})$ can be identified with
the space of $L^{2}$-functions on $\R^{n}$, we obtain, after quantization,
a linear operator\begin{eqnarray*}
Q_{\hbar}([a],\rho_{H}):L^{2}(\Energy)\otimes L^{2}(\R^{n}) & \longrightarrow & L^{2}(\R^{n})\end{eqnarray*}
for any enhancement $a=a(t,p_{1},x_{2})$ of the symplectic micromorphism.
Now, we may define the following the operator on $L^{2}(\R^{n})$
by\begin{eqnarray*}
U_{t}^{a} & := & Q_{\hbar}([a],\rho_{H})(u_{t}\otimes\,\cdot\,),\end{eqnarray*}
where $u_{t}(E)=e^{-\frac{i}{\hbar}t_{0}E}$ is the state with {}``definite
time'' $t_{0}$ on the space of energies. An explicit computation
yields\begin{eqnarray*}
(U_{t}^{a}\Phi)(x) & := & \frac{1}{(2\pi\hbar)^{n}}\SCint\Phi(x_{1})a(t_{0},p_{1},x)e^{\frac{i}{\hbar}p_{1}x_{1}-S(t_{0},p_{1},x)}dp_{1}dx_{1},\end{eqnarray*}
where $S$ is the generating function of phase flow $\Psi_{H}^{t}$
solution of the Hamilton-Jacobi equation as explained in paragraph
\ref{sub:Classical-flow}. Then, a classical result of semi-classical
analysis can now be reformulated in terms of symplectic micromorphisms
and their enhancements: For any Hamiltonian $H:\Cot\R^{n}\rightarrow\R$,
there exists an enhancement of $([a],\rho_{H})$ such that $U_{t}^{a}$
is the propagator, modulo $\hbar^{\infty}$, of the Schr\"odinger equation
with quantum hamiltonian given by the semi-classical pseudo-differential
operator\begin{eqnarray*}
\hat{H} & = & Q_{\hbar}([H],\id_{\Cot\R^{n}}),\end{eqnarray*}
where the germ $[H]$ is understood as an enhancement of the identity
map on $\Cot\R^{n}$ seen as symplectic micromorphism. Moreover, it
is easy to show that, $U_{0}^{a}=\id$ and $U_{t_{2}}^{a}\circ U_{t_{1}}^{a}=U_{t_{1}+t_{2}}^{a}$
 (when defined) is equivalent to $([a],\rho_{H})$ being an action
of energy monoid enhanced as in paragraph \ref{sub:Quant of the energy monoid}
on $\Cot Q$ thanks to the  fact that\begin{eqnarray*}
u_{t_{1}+t_{2}} & = & Q_{\hbar}([1],\mu_{\Energy})(u_{t_{1}}\otimes u_{t_{2}}),\end{eqnarray*}
and the module axioms:\begin{eqnarray*}
(U_{t_{2}}^{a}\circ U_{t_{1}}^{a})\Phi & = & \Big(Q_{\hbar}([a],\rho_{H})\circ\big(\id\otimes Q_{\hbar}([a],\rho_{H})\big)\Big)(u_{t_{2}}\otimes u_{t_{1}}\otimes\Phi),\\
 & = & Q_{\hbar}([a],\rho_{H})\circ\Big(\big(Q_{\hbar}([1],\mu_{\Energy})(u_{t_{2}}\otimes u_{t_{1}})\big)\otimes\Phi)\Big),\\
 & = & U_{t_{1}+t_{2}}^{a}\Phi.\end{eqnarray*}

\end{document}